\numberwithin{equation}{section}
\theoremstyle{plain}
\newtheorem{theorem}{Theorem}[section]
\newtheorem{lemma}[theorem]{Lemma}
\newtheorem{proposition}[theorem]{Proposition}
\newtheorem{corollary}[theorem]{Corollary}
\theoremstyle{definition}
\newtheorem{definition}[theorem]{Definition}
\theoremstyle{remark}
\newtheorem{remark}[theorem]{Remark}
\newcommand*{\id}{{\mathrm{id}}}
\newcommand*{\R}{{\mathbb R}}                                                 
\newcommand*{\CC}{{\mathbb{C}}}                                               
\newcommand*{\Hb}{{\mathbb H}}                                                
\newcommand*{\Vb}{{\mathbb V}}
\newcommand*{\Wb}{{\mathbb W}}                                                
\newcommand*{\Sb}{{\mathbb S}}                                                
\newcommand*{\Nb}{{\mathbb N}}
\newcommand*{\pf}{{\mathfrak p}}
\newcommand*{\rd}{{\mathrm d}}                                                
\newcommand*{\Gr}{{\mathrm{Gr}}} 
\newcommand{\pa}{\partial}
\newcommand{\p}{\tilde{P}}
\newcommand{\m}{\mu_1}
\newcommand{\tm}{\tilde{\mu}_1}
\newcommand{\n}{\mu_2}
\newcommand{\tn}{\tilde{\mu}_2}
\begin{document}
\title[Grassmannian flows and integrable systems]{Grassmannian flows
  and applications to non-commutative non-local and local integrable systems}
\author{Anastasia Doikou}
\author{Simon J.A. Malham}
\author{Ioannis Stylianidis}
\address{Maxwell Institute for Mathematical Sciences,        
and School of Mathematical and Computer Sciences,   
Heriot-Watt University, Edinburgh EH14 4AS, UK (7/9/20)}
\email{A.Doikou@hw.ac.uk, S.J.A.Malham@hw.ac.uk, is11@hw.ac.uk}


\begin{abstract}
We present a method for linearising classes of matrix-valued nonlinear partial differential equations
with local and nonlocal nonlinearities. Indeed we generalise a linearisation procedure originally
developed by P\"oppe based on solving the corresponding underlying linear partial differential
equation to generate an evolutionary Hankel operator for the `scattering data', and then solving
a linear Fredholm equation akin to the Marchenko equation to generate the evolutionary solution
to the nonlinear partial differential system. Our generalisation involves inflating the
underlying linear partial differential system for the scattering data to incorporate
corresponding adjoint, reverse time or reverse space-time data, and it also allows
for Hankel operators with matrix-valued kernels. With this approach we show how to linearise
the matrix nonlinear Schr\"odinger and modified Korteweg de Vries equations as well
as nonlocal reverse time and/or reverse space-time versions of these systems. 
Further, we formulate a unified linearisation procedure that incorporates all these
systems as special cases. Further still, we demonstrate all such systems are
example Fredholm Grassmannian flows.
\end{abstract}

\maketitle

\section{Introduction}
The aim of this paper is to formulate a unified programme
for the linearisation of certain types of nonlinear systems.
We use the Grassmann--P\"oppe method presented in Beck \textit{et al.\/} \cite{paper,paper2}
and Doikou \textit{et al.\/} \cite{paper3}. This method combines the idea that linear
flows on Fredholm Stiefel manifolds project onto nonlinear Riccati flows in a given coordinate
patch of corresponding Fredholm Grassmann manifolds, together with the operator approach
developed by P\"oppe~\cite{poppe1,poppe2} for nonlinear integrable systems.
Indeed we streamline P\"oppe's approach, only requiring P\"oppe's kernel product
rule, and also not requiring commutativity of the integral kernels involved.
Further we generalise P\"oppe's approach by generalising the kernel product
rule central to P\"oppe's method by not insisting on evaluating the trace 
in the product rule. The consequences of implementing these generalisations
are that we provide a unified approach to the linearisation of the matrix 
nonlinear Schr\"odinger and modified Korteweg de Vries equations, as well as
the matrix Korteweg de Vries equation itself and matrix nonlocal versions
of these equations as presented in Ablowitz and Musslimani~\cite{AM}. 
By nonlocal we mean the matrix nonlinearities involve factors with time
or space reversal or both. Further, generalising the kernel product rule
means we generate quite general matrix nonlinear partial differential equations
for the underlying integral kernels in the method. These represent a wide class
of integrable systems in themselves.

Let us now try to disentangle these statements as succinctly as possible.
First let us address the Grassmannian and Riccati flow aspect mentioned. 
Central to the method presented in Beck \textit{et al.\/} \cite{paper,paper2}
are a pair of time-dependent Hilbert--Schmidt operators $Q=Q(t)$ and $P=P(t)$.
Suppose this pair of operators satisfy the linear evolutionary system,
\begin{align*}
\partial_t Q &= A(\id+Q) + BP\\
\partial_t P &= C(\id+Q) + DP,
\end{align*}
where $A$ and $C$ are known bounded operators, while $B$ and $D$ are known,
possibly unbounded, operators. Assume a solution $Q=Q(t)$ and $P=P(t)$ exists
to this system, at least up to some time $T>0$. We now introduce a third
Hilbert--Schmidt operator $G=G(t)$ via the relation,
\begin{equation*}
  P=G(\id+Q).
\end{equation*}
We call this the Riccati relation.
Formally by direct straightforward computation,
$G$ evolves according to the Riccati flow
\begin{equation*}
  \pa_t G=C+DG-G(A+BG).
\end{equation*}
In Beck \textit{et al.} \cite{paper,paper2}, we show there exists a unique,
well-behaved solution $G$ to the Riccati relation at least for some time $T>0$,
which indeed evolves according to such a flow. As we explain in detail
in Section~\ref{subsec:Grassmannianflow}, the Riccati flow for $G$
represents a projected flow via the Riccati relation on the canonical
coordinate patch of a Fredholm Grassmann manifold. Further,
since the operators concerned are Hilbert--Schmidt integral operators,
all of the above can be translated to corresponding equations for the kernels of $P,Q$ and $G$.
At the kernel level the Riccati relation has the form,
\begin{equation*}
  p(x,y) = g(x,y)+\int g(x,z)q(z,y)\,\mathrm{d}z.
\end{equation*}
The interval of integration depends on the application.
This is the Marchenko equation which plays a central role in the classical theory of integrable systems,
for example in the Zakharov--Shabat scheme \cite{zs,zs2},
as well as the work of Ablowitz \textit{et al.} \cite{ablowitz}).
Naturally we can interpret the Riccati equation above in terms of
its integral kernel $g=g(x,y;t)$. In Beck \textit{et al.\/} \cite{paper,paper2}
and Doikou \textit{et al.\/} \cite{paper3} we show many classes
of partial differential equations with nonlocal nonlinearites 
can have the representation given by the Riccati equation.
Hence the solutions $g=g(x,y;t)$ to those equations can be
represented by the three linear equations for $Q$, $P$ and $G$;
the linear equation for $G$ is the Riccati relation. In other words
such equations can be linearised. We meet three notions of
nonlocal nonlinearities herein. Example classes of
nonlinear equations we can solve using the approach just mentioned
are as follows. They demonstrate the first notion of 
nonlocal nonlinearities we refer to. One class of
equations for $g=g(x,y;t)$ we solve in Beck \textit{et al.\/} \cite{paper,paper2}
has the nonlocal Korteweg de Vries form, 
\begin{equation*}
\pa_tg(x,y;t)+\pa_x^3g(x,y;t)+\int_{\R}g(x,z;t)\pa_z g(z,y;t)\,\rd z=0.
\end{equation*}
On the other hand in Doikou \textit{et al.\/} \cite{paper3}
we use the method above to solve Smoluchowksi-type coagulation equations
for $g=g(x;t)$ of the form,
\begin{equation*}
\pa_tg(x;t)=d(\pa_x)g(x;t)+\int_0^xg(x-y;t)g(y;t)\,\rd y-g(x;t)\int_0^\infty g(y;t)\,\rd y.
\end{equation*}
Here $d=d(\pa_x)$ is a constant coefficient polynomial of $\pa=\pa_x$.
We associate this class of coagulation nonlocal nonlinear terms with
the notion of nonlocal nonlinearity present in the nonlocal KdV equation just above.

Also in Doikou \textit{et al.\/} \cite{paper3} we generalised the
approach above to consider linear equations for the operators $P$ and $Q$,
and a Riccati relation for $G$, for example as follows:
\begin{align*}
\mathrm{i}\pa_tP&=\pa_x^2P,\\
Q&=P^\dag P,\\
P&=G(\id+Q).
\end{align*}
In addition we assume $P$ is a Hankel operator with an integral kernel
of the form $p=p(y+z+x;t)$. We say this system is linear. This is because,
to solve for $G$, first, we need to solve the linear partial differential equation for $P$.
Then $Q$ is explicity given as a quadratic term in $P$ so we do not
need to solve an equation for $Q$. Then second, we need to solve the
linear Riccati relation for $G$. As shown in Doikou \textit{et al.\/} \cite{paper3},
and we also see in Section~\ref{sec:applications}, in practice
as a result of this procedure we can linearise partial differential
equations for $g=g(y,z;x,t)$ of the form,
\begin{equation*}
\mathrm{i}\pa_tg(y,z;x,t)=\pa_x^2g(y,z;x,t)+2g(y,0;x,t)g^\ast(0,0;x,t)g(0,z;x,t).
\end{equation*}
We call this type of equation a kernel equation, though note the
special form of nonlocal nonlinearity present. This is the second
notion of nonlocal nonlinearity to which we refer. Note if we specialise
this last equation so $y=z=0$, then $g=g(0,0;x,t)$ satisfies
the nonlinear Schr\"odinger equation, with the usual local nonlinearity.
This approach for the kernel form of the nonlinear Schr\"odinger equation
shown just above, is based on the approach P\"oppe~\cite{poppe1,poppe2}
developed. Though the connection with Grassmannian flows can be glimpsed
via the formulation of the linear operator equations above, to maintain
conciseness for now we refer the reader to Section~\ref{subsec:Grassmannianflow}
for the full details. The procedure above lends itself naturally to
the non-commutative setting and, as we see in Section~\ref{sec:applications},
the integral kernels for $P$, $Q$ and $G$ can be matrix-valued.  
Further we have thusfar glossed over an important component of
P\"oppe's method, which is the major insight underlying the method.
This is the aforementioned kernel product rule, in which the Hankel property
of $P$ plays a crucial role. Suppose $F$ is a linear operator with
kernel $f(y,z;x)$. Note we assume $f$ depends on a parameter $x$.
Recall from above if $F$ is a Hankel operator, then $f=f(y+z+x)$.
Let us denote by $[F]$ the kernel of $F$, i.e.\/ $[F]=f$. 
Now suppose $F$, $F'$, $H$ and $H'$ are all Hilbert--Schmidt operators
with continuous kernels, and in addition assume $H$ and $H'$ are
Hankel operators. Then the fundmental theorem of calculus implies
\begin{equation*}
\bigl[F\pa_x(HH')F'\bigr](y,z;x)=[FH](y,0;z)[H'F'](0,z;x).
\end{equation*}
This is the crucial kernel product rule to which we refer.
It generalises the product rule used by P\"oppe who used
the `trace' form of this rule in the sense that the rule was applied
with $y=z=0$. The kernel product rule above is the only property
we assume in Doikou \textit{et al.\/} \cite{paper3} and herein.
Further the reader can now begin to fathom how the nonlinear term
in the kernel equation for $g=g(y,z;x,t)$ was formed---by applying
this product rule twice to the appropriate product of operators.
Indeed one application of the kernel product rule generates 
$[\,\cdot\,](y,0;x)[\,\cdot\cdot\,](0,z;x)$, while after
two applications we get
$[\,\cdot\,](y,0;x)[\,\cdot\cdot\,](0,0;x,t)[\,\cdots\,](0,z;x)$.

Herein we use the generalisations just mentioned to show how
matrix-valued kernel equations analogous to that for $g=g(y,z;x,t)$
above can be linearised. Hence the corresponding standard matrix-valued
local nonlinearity partial differential systems can be linearised.
The matrix-valued systems we establish this for include the nonlinear
Schr\"odinger equation, the modified Korteweg de Vries equation and the
Korteweg de Vries equation. Further we can also linearise in
this manner corresponding matrix-valued nonlocally nonlinear
versions of these equations, including the reverse space-time and 
reverse time nonlocal nonlinear Schr\"odinger equation as well as 
the reverse space-time nonlocal modified Korteweg de Vries equation.
For example the latter equation has the following form for $g=g(x,t)$:
\begin{equation*}
\pa_tg+\pa_x^3g=3g\tilde g (\pa_x g)+3(\pa_xg)\tilde g g,
\end{equation*}
where $\tilde g(x,t)=g^{\mathrm{T}}(-x,-t)$.
This is the third notion of nonlocal nonlinearity to which we refer.
Such equations can be found in Ablowitz and Musslimani~\cite{AM}.
Herein we only focus on the latter two notions of nonlocal nonlinearities.
To distinguish them, we refer to the third notion just above as the
nonlocally nonlinear equations, while the second notion
above involving the kernels $g=g(y,z;x,t)$ we refer to as
nonlocal/kernel or simply `kernel equations'.

Let us now discuss the `unified programme' mentioned at the very beginning.
Herein we extend the Doikou \textit{et al.\/} \cite{paper3}, which is based
on the second system of equations for $P$, $Q$ and $G$ above. Indeed we generalise
and inflate the system as follows, this is the application linear system in
Definition~\ref{appliedlinear}:
\begin{align*}
\pa_t{P}&=\mu_1\pa_x^2 P+\mu_2\pa_x^3 P\\
\pa_t{\p}&=\tilde{\mu}_1\pa_x^2 \p+\tilde{\mu}_2\pa_x^3\p\\
Q&=\p P\\
P&=G(\mathrm{id}+Q),
\end{align*}
where $\mu_1,\mu_2,\tilde{\mu}_1,\tilde{\mu}_2\in\mathbb C$ are constant parameters.
Note we have separate linear equations for the linear operators $P$ and $\p$,
and we also include the linear operators $\tilde Q=P\p$ and $\tilde{G}$,
where $\tilde G$ satisfies $\p=\tilde{G}(\mathrm{id}+\tilde Q)$.
All the matrix nonlocal/kernel and nonlocal nonlinear systems we consider
herein linearise to the system above for appropriate choices of the parameters
and $\p$. For example for the complex matrix modified Korteweg de Vries equation 
we set $\mu_1=\tilde{\mu}_1=0$, $\mu_2=\tilde{\mu}_2=-1$ and $\p=-P^\dag$;
see Remark~\ref{rmk:complexmKdV}. The inflated system above still naturally 
generates a Grassmannian flow; see Section~\ref{subsec:Grassmannianflow}.
Thus all the nonlinear systems we consider herein are examples of such flows,
adding to the large class of nonlocal nonlinear systems
(in the sense of the first notion on nonlocal nonlinearity we mentioned above)
we have identified as such, for example the Smoluchowksi coagulation flows
considered in Doikou \textit{et al.} \cite{paper3}
and all the nonlocal nonlinear systems considered in Beck \textit{et al.} \cite{paper,paper2}.

Our work herein, in Beck \textit{et al.\/} \cite{paper,paper2}
and Doikou \textit{et al.\/} \cite{paper3}, was motivated by the work of
Ablowitz \textit{et al.\/} \cite{ablowitz}, Dyson \cite{dyson},
McKean \cite{mckean} and by a series of papers by P\"oppe \cite{poppe1, poppe2, poppe3},
P\"oppe and Sattinger \cite{poppe4} and Bauhardt and P\"oppe~\cite{BP-ZS}.
Of particular importance for us was the realisation by P\"oppe that the solution to a soliton equation is given
by some function of the Fredholm determinant of the solution to the linearised soliton equation.
That the scattering operators are Hankel operators is another key ingredient in P\"oppe's method.
Hankel operators have received a lot of recent attention, see Grudsky and Rybkin \cite{GR1, GR2},
Grellier and Gerard~\cite{Gerard} and Blower and Newsham \cite{BN}. Non-commutative integrable systems,
see Fordy and Kulisch \cite{FK}, Nijhoff \textit{et al.} \cite{NQLC}, Nijhoff \textit{et al.\/} \cite{NQC},
Fokas and Ablowitz \cite{FA}, Ablowitz, Prinari and Trubatch \cite{APT},
and latterly nonlocal integrable systems have also recently received a lot of attention,
see Ablowitz and Musslimani \cite{AM}, Fokas \cite{F2016} and Grahovski, Mohammed and Susanto \cite{GMS}.
With regard to the non-commutative NLS in particular, see Mumford \cite{M1974} for the first instance
of the multi-component NLS, and for its discretisation, see the work by
Degasperis and Lombardo \cite{DL1, DL2}, Ablowitz \textit{et al.\/} \cite{APT}
and, more recently, Doikou \textit{et al.\/} \cite{DFS} and Doikou and Sklaveniti \cite{DS}. 

Other work in such directions includes Fokas' unified transform method,
see Fokas and Pelloni \cite{FP} and the references therein,
and the scheme by Zakharov and Shabat \cite{zs,zs2}.
Mumford \cite[p.~3.239]{M1984} also took a similar viewpoint
providing solutions to the Sine-Gordon, KdV and KP equations using $\theta$-functions.
Classical integrability involves the existence 
of a Lax pair $(\tilde{L},\tilde{D})$ which satisfies the auxiliary linear problem $\tilde{L}\Psi=\lambda\Psi$
and $\partial_t\Psi = \tilde{D}\Psi$ for an auxiliary function $\Psi$ and spectral parameter $\lambda$.
Requiring these two equations be compatible, one arrives at the so-called zero curvature condition
$\partial_t \tilde{L}=[\tilde{D},\tilde{L}]$, which in turn yields the nonlinear integrable equation.
In this context, the existence of the Lax pair provides extra symmetries and, hence, integrability.
In our formulation, which is based on P\"oppe's method, we impose the linear evolutionary condition
$\partial_t\Psi = \tilde{D}\Psi$ and the Hankel property for $\Psi$ only; 
see Beck \textit{et al.\/} \cite[pp. 5-6]{paper2}. The connection between classical
integrable systems and Grassmannians was first explored by Sato~\cite{SatoI, SatoII}
and developed further by Segal and Wilson~\cite{SW}.

To summarise, what is new in this paper is that, for a collection of classical integrable nonlinear systems, we:
\begin{enumerate}
\item[(i)] Optimise and simplify the method of P\"oppe. We show how a linear equation for
a Hankel operator, based on the linearised version of the system,
together with a linear integral relation, generates the solution
to a corresponding nonlocal/kernel version of the integrable nonlinear systems.
The integrable nonlinear system itself is generated
by a further projection. The approach is optimal/minimal as it only uses the
product rule of the P\"oppe approach, and none further;
\item[(ii)] Generalise this optimal approach to the non-commutative case.
We do not require the operator kernels involved to commute and
thus the entire linearisation procedure for the integrable systems
involved applies to matrix-valued systems;
\item[(iii)] Demonstrate via the non-commutative P\"oppe procedure, 
the nonlocal/kernel nonlinear integrable systems generated are
examples of evolutionary Grassmannian flows. We show
how the evolutionary linear system for the Hankel operator generates an
infinite dimensional Stiefel manifold flow. The projection of that flow
onto the underlying Fredholm Grassmannian in a given coordinate chart
is the nonlocal/kernel nonlinear integrable system under consideration;
\item[(iv)] Show how some matrix-valued nonlocal nonlinear
partial differential systems, including nonlocal reverse time and
reverse space-time versions of classical integrable systems,
can also be linearised using the non-commutative P\"oppe procedure we advocate;
\item[(v)] Reveal how the Miura transformation is the result of
a trivial quadratic operator decomposition into linear operator factors
at the level of the two separate non-commutative linear systems for the modified
Korteweg de Vries and Korteweg de Vries equations;
\item[(vi)] Present a unified programme that incorporates all the matrix
kernel, nonlocal nonlinear and local nonlinear integrable systems
we consider. We show the different systems result from different choices
of the parameters and underlying Hankel operators $P$ and $\p$;
\item[(vii)] Discuss how, for given initial data, this linearisation approach
can be used to find time-evolutionary solutions to such
non-commutative integrable nonlinear systems analytically, or
efficiently numerically. In other words we can generate the
evolutionary solution at any time by evaluating, analytically,
the solution to the underlying linear partial differential equation
for $P$ and if required $\p$ at any given time $t$,
determining $Q$ and if required $\tilde{Q}$ at that time,
and then solving, usually numerically, the linear Fredholm equations defining
$G$ and if required $\tilde{G}$ at that time $t$. We do not have
to numerically evolve the solution to the nonlinear system in time.
This is one of the practical `gains' we have made.
\end{enumerate}  
Our paper is outlined as follows.
In Section~\ref{sec:unification} we introduce some preliminary notions and results we use
throughout this paper, in particular the kernel bracket,
observation functional and product rule.
We then describe the unification scheme in its most general form.
In Section~\ref{sec:applications} we present the main results of this paper.
We start by establishing existence and uniqueness properties and proceed to prove
Theorems \ref{2nd order} and \ref{3rd order},
where the linearisation of both nonlocal/kernel and local versions of different integrable systems
is achieved without assuming commutativity.
Finally, in Section~\ref{sec:discussion} we discuss possible extensions to the work herein.

\section{Unification}\label{sec:unification}

\subsection{Preliminaries}
Let us first describe the general framework by introducing the types of operators
we use and providing some necessary definitions.
As in Doikou \textit{et al.} \cite{paper3}, we consider 
Hilbert--Schmidt integral operators which depend on both a 
spatial parameter $x\in\mathbb{R}$ and a time parameter $t\in[0,\infty)$.
The class of Hilbert--Schmidt operators, $\mathfrak{J}_2$, are representable
in terms of square-integrable kernels, and so given an operator $F=F(x,t)$ with $F\in\mathfrak{J}_2$,
there exists a square-integrable kernel $f=f(y,z;x,t)$ such that
\begin{equation*}
  (F\phi)(y;x,t) = \int_{-\infty}^0 f(y,z;x,t)\phi(z)\,\mathrm{d}z,
\end{equation*}
for any square-integrable function $\phi$.
\begin{definition}[Kernel bracket]
With reference to the operator $F$ just above, we use the \emph{kernel bracket} notation
$[F]$ to denote the kernel of $F$:
\begin{equation*}
  [F](y,z;x,t) \coloneqq f(y,z;x,t).
\end{equation*}
For brevity, we often drop the dependencies and write $[\,\cdot\,]$.
\end{definition}
Of critical importance throughout this paper is a class of integral operators known as
Hankel operators. Indeed we consider time-dependent Hankel operators
which also depend on a parameter $x$ of the following form.
\begin{definition}[Hankel operator with parameter]
  We say a given time-dependent Hilbert--Schmidt operator $H\in\mathfrak{J}_2$
  with corresponding square-integrable kernel $h$ is \emph{Hankel} or \emph{additive}
  with parameter $x\in\mathbb{R}$ if its action, for any square-integrable function $\phi$, is given by
  \begin{equation*}
    (H\phi)(y;x,t) \coloneqq \int_{-\infty}^0 h(y+z+x;t)\phi(z)\,\mathrm{d}z.
  \end{equation*}
\end{definition}
As in P\"oppe \cite{poppe2}, we introduce the following observation functional
for any Hilbert--Schmidt operator with \emph{continuous} kernel.
\begin{definition}[Observation functional]
  Given a Hilbert--Schmidt operator $F$ with a square-integrable and continuous kernel $f=f(y,z)$,
  the \emph{observation functional} $\langle\cdot\rangle$ is defined to be $\langle F\rangle\coloneqq f(0,0)$.
\end{definition}
\begin{remark}
  Further below and in Section~\ref{sec:applications} we are concerned
  with Hilbert--Schmidt operators which depend on both a spatial parameter $x\in\mathbb{R}$
  and a time parameter $t\in [0,\infty)$. In that case, if $F=F(x,t)$ represents such a parameter
  dependent operator with corresponding kernel $f=f(y,z;x,t)$,
  then we have $\langle F(x,t)\rangle=f(0,0;x,t)$. 
\end{remark}
As mentioned previously, there is one key `product rule' property. This is the following;
we include the proof from Doikou \textit{et al.} \cite{paper3} for completeness.
\begin{lemma}[Product rule]
  Assume $H,H'$ are Hankel Hilbert--Schmidt operators with parameter $x$ and $F,F'$ are Hilbert--Schmidt operators.
  Assume further that the corresponding kernels of $H, H', F$ and $F'$ are continuous.
  Then, the following \emph{product rule} holds 
  \begin{equation*}
    [F\pa_x(HH')F'](y,z;x) = [FH](y,0;x)[H'F'](0,z;x).
  \end{equation*}
As a special case, we have
\begin{equation*}
  \langle F\partial_x(HH')F'\rangle = \langle FH\rangle \langle H'F'\rangle.
\end{equation*}
\end{lemma}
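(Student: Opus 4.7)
The plan is to prove the identity first for $\pa_x(HH')$ alone, and then to sandwich with $F$ and $F'$ by writing composition as iterated kernel integrals. The key structural fact is that Hankel kernels are additive in their arguments, so the $x$-derivative can be traded for a derivative in the inner integration variable; the fundamental theorem of calculus then collapses that integral to a boundary term.

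First I would expand the kernel of the composition,
\begin{equation*}
[HH'](y,z;x) = \int_{-\infty}^0 h(y+w+x)\,h'(w+z+x)\,\rd w,
\end{equation*}
where $h$ and $h'$ are the continuous Hankel kernels of $H$ and $H'$. Since these depend on $x$ and $w$ only through $y+w+x$ and $w+z+x$, one has $\pa_x h(y+w+x)=\pa_w h(y+w+x)$, and analogously for $h'$. Hence
\begin{equation*}
\pa_x[HH'](y,z;x) = \int_{-\infty}^0 \pa_w\bigl(h(y+w+x)\,h'(w+z+x)\bigr)\,\rd w = h(y+x)\,h'(z+x),
\end{equation*}
the second equality being the fundamental theorem of calculus on $(-\infty,0]$, with the boundary term at $-\infty$ vanishing. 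The right-hand side is precisely $[H](y,0;x)\,[H'](0,z;x)$, which is the product rule at the level of $\pa_x(HH')$.

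Next I would conjugate with $F$ and $F'$. Expressing composition as an iterated kernel integral over $(-\infty,0]^2$,
\begin{align*}
[F\pa_x(HH')F'](y,z;x)
&= \iint [F](y,w;x)\,[H](w,0;x)\,[H'](0,v;x)\,[F'](v,z;x)\,\rd w\,\rd v\\
&= \biggl(\int [F](y,w;x)\,[H](w,0;x)\,\rd w\biggr)\!\biggl(\int [H'](0,v;x)\,[F'](v,z;x)\,\rd v\biggr)\\
&= [FH](y,0;x)\,[H'F'](0,z;x).
\end{align*}
The separation of the double integral into a product is legitimate because the two middle factors involve $w$ and $v$ separately. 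Crucially this step makes no use of commutativity: the integration variables sit on opposite sides of the fixed ordered product $[H](w,0;x)[H'](0,v;x)$, so no rearrangement of non-commuting matrix-valued factors is needed. Setting $y=z=0$ and invoking continuity of the kernels at the origin then yields the special case $\langle F\pa_x(HH')F'\rangle=\langle FH\rangle\,\langle H'F'\rangle$.

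The only genuinely delicate point is the vanishing of $h(y+w+x)\,h'(w+z+x)$ as $w\to-\infty$ needed to justify the fundamental theorem step; this is not implied by square-integrability of the kernels alone, but is standard under the mild decay implicit in the Hilbert--Schmidt/Hankel setting used throughout the paper, where the Hankel kernels arise from solutions to linear evolution equations with adequate decay. Once this is granted, the argument requires nothing beyond the Hankel property, the fundamental theorem of calculus, and Fubini.
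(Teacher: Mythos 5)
Your proof is correct and follows essentially the same route as the paper: trade $\pa_x$ for the derivative in the inner integration variable via the Hankel (additive) property, apply the fundamental theorem of calculus to collapse that integral to the boundary term at $0$, and then separate the remaining integrals; the paper simply carries this out in one triple integral rather than your two-stage version, and like you it tacitly assumes the vanishing of the product at $-\infty$.
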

\begin{proof}
We use the fundamental theorem
of calculus and Hankel properties of $H$ and $H'$.
Let $f$, $h$, $h'$ and $f'$ denote the integral kernels 
of $F$, $H$, $H'$ and $F'$ respectively.
By direct computation $[F\pa_x(HH')F'](y,z;x)$ equals
\begin{align*}
&\int_{\R_-^3}
f(y,\xi_1;x)\pa_x\bigl(h(\xi_1+\xi_2+x)h^\prime(\xi_2+\xi_3+x)\bigr)
f^\prime(\xi_3,z;x)\,\rd \xi_3\,\rd \xi_2\,\rd \xi_1\\
&=\int_{\R_-^3}
f(y,\xi_1;x)\pa_{\xi_2}\bigl(h(\xi_1+\xi_2+x)h^\prime(\xi_2+\xi_3+x)\bigr)
f^\prime(\xi_3,z;x)\,\rd \xi_3\,\rd \xi_2\,\rd \xi_1\\
&=\int_{\R_-^2}
f(y,\xi_1;x)h(\xi_1+x)h^\prime(\xi_3+x)f^\prime(\xi_3,z;x)\,\rd \xi_3\,\rd \xi_1\\
&=\int_{\R_-}f(y,\xi_1;x)h(\xi_1+x)\,
\rd \xi_1\cdot\int_{\R_-}h^\prime(\xi_3+x)f^\prime(\xi_3,z;x)\,\rd \xi_3\\
&=\bigl([FH](y,0;x)\bigr)\bigl([H'F'](0,z;x)\bigr),
\end{align*}
giving the first result. Setting $y=z=0$ generates the second result.
\end{proof}
\begin{remark}\label{motivation}
  Our motivation for introducing the observation functional $\langle\cdot\rangle$
  and the product rule above stems for the work of P\"oppe in \cite{poppe1, poppe2, poppe3}
  and P\"oppe and Sattinger \cite{poppe4}.
  In our case, however, we develop the product rule for the kernel bracket $[\,\cdot\,]$
  as a precursor to the observation functional $\langle\,\cdot\,\rangle$.
  Each application of the product rule produces an extra degree of nonlinearity,
  hence we do so in a targeted way so as to generate the nonlinearity required.
  The fact this is accomplished at the kernel level means the resulting nonlinear
  partial differential equation is a nonlocal/kernel one in the sense described
  in the Introduction. Having generated the nonlocal/kernel PDE, specialising
  to the observation functional $\langle\cdot\rangle$ has the effect
  of passing to the corresponding local PDE, as every term is now evaluated at $(x,t)$.
\end{remark}
We record in the following lemma some identities for $U\coloneqq (\mathrm{id}+F)^{-1}$
which are useful later on. Here we assume $F$ depends on a parameter.
Similar results are derived by P\"oppe \cite{poppe1, poppe2}.
\begin{lemma}[\textbf{U-identities}]\label{Uid}
Suppose the operator $F$ depends on a parameter with respect
to which we wish to compute derivatives. Further suppose $U\coloneqq (\mathrm{id}+F)^{-1}$ exists.
Then the following identities hold:
\begin{enumerate}
\item[(i)] $\partial U = -U(\partial F)U$;
\item[(ii)] $\mathrm{id} - U = UF = FU$;
\item[(iii)] $U_x = -U_xF - UF_x = -F_xU - FU_x$;
\item[(iv)] $U_{xx} = -2U_xF_xU - UF_{xx}U = -2UF_xU_x - UF_{xx}U$;
\item[(v)] $U_{xxx} = -6U_xF_xU_x - 3UF_{xx}U_x - 3U_xF_{xx}U - UF_{xxx}U$;
\item[(vi)] $UF_xU_x = U_xF_xU$.
\end{enumerate}
\end{lemma}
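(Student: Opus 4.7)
The plan is to derive all six identities from the defining relation $U(\id+F)=(\id+F)U=\id$, using only differentiation of this relation and left or right multiplication by $U$. I proceed roughly in the order listed, with the later items leaning on the earlier ones.

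Identity (ii) is immediate: expanding $U(\id+F)=\id$ yields $U+UF=\id$, and symmetrically $(\id+F)U=\id$ yields $U+FU=\id$. For (i), differentiate $U(\id+F)=\id$ with respect to the parameter to obtain $(\pa U)(\id+F)+U(\pa F)=0$, and right-multiply by $U$; this gives $\pa U=-U(\pa F)U$. Identity (iii) then follows by differentiating the two forms in (ii) with respect to $x$: from $U+UF=\id$ we get $U_x+U_xF+UF_x=0$, and from $U+FU=\id$ we get $U_x+F_xU+FU_x=0$. Identity (vi) comes directly from (i) specialised to $\pa=\pa_x$: since $U_x=-UF_xU$, equivalently $UF_xU=-U_x$, both $UF_xU_x=UF_x(-UF_xU)=-UF_xUF_xU$ and $U_xF_xU=(-UF_xU)F_xU=-UF_xUF_xU$ collapse to the same four-factor product.

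For (iv), differentiate the first form of (iii) once more to obtain $U_{xx}+U_{xx}F+2U_xF_x+UF_{xx}=0$, hence $U_{xx}(\id+F)=-2U_xF_x-UF_{xx}$; right-multiplication by $U$ then gives the first form of (iv), and the second form is obtained by applying (vi) to rewrite $2U_xF_xU$ as $2UF_xU_x$.

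The most delicate step is (v). Differentiate $U_{xx}(\id+F)=-2U_xF_x-UF_{xx}$ to obtain $U_{xxx}(\id+F)=-3U_{xx}F_x-3U_xF_{xx}-UF_{xxx}$, and right-multiply by $U$ to get $U_{xxx}=-3U_{xx}F_xU-3U_xF_{xx}U-UF_{xxx}U$. Substitute the first form of (iv) for $U_{xx}$ in the $-3U_{xx}F_xU$ term and repeatedly invoke $UF_xU=-U_x$ from (i) to contract the resulting quintic products; this collapses the expression to exactly $-6U_xF_xU_x-3UF_{xx}U_x-3U_xF_{xx}U-UF_{xxx}U$. The only real obstacle is careful bookkeeping of signs and operator ordering in the non-commutative setting; no new ideas beyond items (i)--(iv) and (vi) are required.
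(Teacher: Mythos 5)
Your proposal is correct and follows essentially the same route as the paper: (i)--(iii) from differentiating the defining relation, (vi) via $U_x=-UF_xU$, and (iv), (v) by further differentiation, right-multiplication by $U$, and back-substitution of (iv) together with (i) to contract the quintic products in (v). The only cosmetic difference is that you differentiate the rearranged identity $U_{xx}(\mathrm{id}+F)=-2U_xF_x-UF_{xx}$ rather than the form $U_{xx}=-U_{xx}F-2U_xF_x-UF_{xx}$ used in the paper, which is equivalent.
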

\begin{proof}
(i) Since $UU^{-1}=\mathrm{id}$, we have $(\partial U)U^{-1}=-U\partial F$
and so $\partial U = -U(\partial F) U$.
(ii) We have $\mathrm{id} - U=U(U^{-1}-\mathrm{id})=(U^{-1}-\mathrm{id})U$ which gives the
result since $U^{-1}-\mathrm{id} = F$.
(iii) These follow by differentiating (ii).
(iv) From the first part of (iii) we have $U_{xx}=-U_{xx}F-2U_xF_x-UF_{xx}$,
so $U_{xx}(\mathrm{id}+F)=-2U_xF_x-UF_{xx}$ and hence $U_{xx} = -2U_xF_xU - UF_{xx}U$.
Similarly, $U_{xx} = -2UF_xU_x - UF_{xx}U$.
(v) Again from the first part of (iii) we have $U_{xx}=-U_{xx}F-2U_xF_x-UF_{xx}$,
so $U_{xxx}=-U_{xxx}F-3U_{xx}F_x-3U_xF_{xx}-UF_{xxx}$ and thus
$U_{xxx}(\mathrm{id}+F)=-3U_{xx}F_x-3U_xF_{xx}-UF_{xxx}$.
This implies $U_{xxx}=-3U_{xx}F_xU-3U_xF_{xx}U-UF_{xxx}U$.
Now substituting $U_{xx} = -2U_xF_xU - UF_{xx}U$ from (iv) and using
(i) gives the result.
(vi) Using (i), we have $UF_xU_x = -UF_xUF_xU = U_xF_xU$.
\end{proof}

\subsection{Unification scheme}
Assume $P=P(x,t)$ and $\p=\p(x,t)$ are Hankel Hilbert--Schmidt operators with
respective integral kernels $p=p(y+z+x;t)$ and $\tilde{p}=\tilde{p}(y+z+x;t)$
and $Q=Q(x,t)$ and $G=G(x,t)$ are Hilbert--Schmidt operators
with respective kernels $q=q(y,z;x,t)$ and $g=g(y,z;x,t)$.
\begin{definition}[Abstract linear system]\label{def:abstractlinearsystem}
Assume the operators $P$, $\p$, $Q$ and $G$ satisfy the linear system of equations:
\begin{equation*}
\begin{aligned}
\partial_t{P} &= d(\partial_x)P\\
\partial_t{\p} &= \tilde{d}(\partial_x)\p\\
Q &= \p P\\
P &= G(\mathrm{id} + Q),
\end{aligned}
\end{equation*}
where $d$ and $\tilde{d}$ are polynomials of $\partial_x$ with constant coefficients. 
In addition, for convenience later, we also set $\tilde Q=P\p$ and
define $\tilde{G}$ as the solution to the linear equation $\p=\tilde{G}(\mathrm{id}+\tilde Q)$.
\end{definition}
As in Beck \textit{et al.} \cite{paper, paper2},
our goal later is to extract a nonlinear flow for $G$ from this linear system.
Then, after considering the corresponding kernels, we apply the product rule whenever necessary,
so as to show the function $[G]=[G](y,z;x,t)$ satisfies in each case the targeted nonlocal/kernel integrable system.
The different cases are distinguished by the order of $d$ and $\tilde{d}$ and, crucially, by the choice of $\p$.
A further projection using the observation functional yields
the analogous local integrable system for $\langle G\rangle=\langle G\rangle(x,t)$.
As this whole process does not require commutativity between the operator kernels involved,
the systems generated can have matrix form. 

These ideas, besides being related to Zakharov--Shabat scheme \cite{zs,zs2},
Fokas and Pelloni \cite{FP} and Ablowitz \textit{et al.} \cite{APT}
in the general context of inverse scattering, have been motivated
P\"oppe \cite{poppe1, poppe2, poppe3}, P\"oppe and Sattinger \cite{poppe4}
and Ablowitz \textit{et al.} \cite{ablowitz}. In particular, the generation of the nonlinear PDEs
from the linear system above via algebraic manipulations and the product rule is similar
in spirit to P\"oppe \cite{poppe1, poppe2}.
Moreover, the corresponding assumptions in Ablowitz \textit{et al.} \cite{ablowitz}
have been instructive in the way we choose the operator $\p$ to generate each case.

Lastly, in light of the system of linear equations we introduced above we have the following
corollary to Lemma~\ref{Uid}.
\begin{corollary}\label{cor:Vid}
  Suppose we set $F\coloneqq Q$ with $Q=\p P$ and $U\coloneqq (\mathrm{id}+Q)^{-1}$
  so $U=(\mathrm{id} + \p P)^{-1}$.  Assume $U$ exists.
  Then $U$ satisfies properties (i)--(vi) in Lemma~\ref{Uid}.
  Further suppose we set $F\coloneqq\tilde Q$ with $\tilde Q=P\p$ and $V\coloneqq(\mathrm{id}+\tilde Q)^{-1}$.
  Assume $V$ exists. Then similarly $V$ satisfies properties (i)--(vi) in Lemma~\ref{Uid}.
  In addition we note $PU^{-1}=V^{-1}P$, so we have $VP=PU=G$.
  Similarly, we have $U\p=\p V=\tilde{G}$.  
\end{corollary}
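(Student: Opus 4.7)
The plan is essentially direct verification, with the corollary amounting to a bookkeeping exercise that records how Lemma~\ref{Uid} and the Riccati relations from Definition~\ref{def:abstractlinearsystem} interact with the specific quadratic structure $Q=\p P$ and $\tilde Q=P\p$. For the first two assertions---that $U$ and $V$ each satisfy properties (i)--(vi)---I would simply invoke Lemma~\ref{Uid} with $F\coloneqq Q=\p P$ in the first instance and $F\coloneqq\tilde Q=P\p$ in the second. The hypotheses of Lemma~\ref{Uid} demand only that $(\mathrm{id}+F)^{-1}$ exist (which is the standing assumption on $U$ and $V$) and that $F$ depend on the parameter with respect to which derivatives are taken; both $Q$ and $\tilde Q$ inherit their $x$- and $t$-dependence from $P$ and $\p$, so this condition holds automatically.

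For the intertwining identity, I would compute directly
\begin{equation*}
  PU^{-1}=P(\mathrm{id}+\p P)=P+P\p P=(\mathrm{id}+P\p)P=V^{-1}P,
\end{equation*}
and then left-multiply by $V$ and right-multiply by $U$ to obtain $VP=PU$. To tie this to $G$, I would note that the Riccati relation $P=G(\mathrm{id}+Q)=GU^{-1}$ from Definition~\ref{def:abstractlinearsystem} yields $G=PU$ upon right-multiplication by $U$, giving $VP=PU=G$ as claimed.

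The analogous identity $U\p=\p V=\tilde G$ follows by the mirror computation
\begin{equation*}
  \p V^{-1}=\p(\mathrm{id}+P\p)=\p+\p P\p=(\mathrm{id}+\p P)\p=U^{-1}\p,
\end{equation*}
which rearranges to $U\p=\p V$, together with $\tilde G=\p V$ obtained from $\p=\tilde G(\mathrm{id}+\tilde Q)=\tilde G V^{-1}$. There is no substantive obstacle in this corollary; if anything warrants a comment it is that in the non-commutative setting $U$ and $V$ are genuinely different operators, and the intertwining identities above are precisely what will allow the free passage between the two factorisations $VP$ and $PU$ (and the corresponding ones for $\tilde G$) needed to generate the nonlinear flows for $[G]$ and $[\tilde G]$ in Section~\ref{sec:applications}.
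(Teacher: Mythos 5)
Your proposal is correct and matches the paper's intent exactly: the paper states this corollary without a separate proof, regarding it as immediate from Lemma~\ref{Uid} together with the intertwining identity $PU^{-1}=P(\mathrm{id}+\p P)=(\mathrm{id}+P\p)P=V^{-1}P$ and the Riccati relations $P=GU^{-1}$, $\p=\tilde G V^{-1}$, which is precisely the verification you give.
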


\subsection{Grassmannian flow}\label{subsec:Grassmannianflow}
We now outline how the flow prescribed in Definition~\ref{def:abstractlinearsystem} represents
a flow on a Fredholm Grassmann manifold. Given the fields $P$, $\p$, $Q$, $\tilde Q$, $G$
and $\tilde G$ satisfying the linear system of equations in Definition~\ref{def:abstractlinearsystem},
we set:
\begin{equation*}
  \mathcal P\coloneqq\begin{pmatrix} P &\p\end{pmatrix}, \quad
  \mathcal Q\coloneqq\begin{pmatrix} Q & O\\ O&\tilde Q\end{pmatrix}
  \quad\text{and}\quad
  \mathcal G\coloneqq\begin{pmatrix} G &\tilde G\end{pmatrix}.
\end{equation*}
We observe the system of linear equations satisfied by $G$ and $\tilde G$ is equivalent to
the linear equation:
\begin{equation*}
  \mathcal P=\mathcal G\,(\mathrm{id}+\mathcal Q).
\end{equation*}
If $\mathcal P$ and $\mathcal Q$ are a Hilbert--Schmidt operators and $(\mathrm{id}+\mathcal Q)^{-1}$
exists, then the solution $\mathcal G$ represents an element in a given coordinate patch
of a Fredholm Grassmann manifold, or ``Grassmannian'' for short.
We briefly explain this context here. More details and background information can be found in 
Beck \textit{et al.\/ }~\cite{paper,paper2}, Doikou \textit{et al.} \cite{paper3},
Segal and Wilson~\cite{SW} and Pressley and Segal~\cite{PS}.
Suppose we have a separable Hilbert space $\Hb$.
The Fredholm Grassmannian of all subspaces of $\Hb$ that are
comparable in size to a given closed subspace $\Vb\subset\Hb$
is defined as follows; see Pressley and Segal~\cite{PS}. 
\begin{definition}[Fredholm Grassmannian]\label{def:FredholmGrassmannian}
Let $\Hb$ be a separable Hilbert space with a given decomposition
$\Hb=\Vb\oplus\Vb^\perp$, where $\Vb$ and $\Vb^\perp$ are infinite
dimensional closed subspaces. The Grassmannian $\Gr(\Hb,\Vb)$
is the set of all subspaces $\Wb$ of $\Hb$ such that:
\begin{enumerate}
\item[(i)] The orthogonal projection $\mathrm{pr}\colon\Wb\to\Vb$ is 
a Fredholm operator, indeed it is a Hilbert--Schmidt perturbation
of the identity; and
\item[(ii)] The orthogonal projection $\mathrm{pr}\colon\Wb\to\Vb^\perp$ 
is a Hilbert--Schmidt operator.
\end{enumerate}
\end{definition}
To help elucidate the structure of the Grassmannian $\Gr(\Hb,\Vb)$ and
respresentative coordinate charts, let us construct an example Grassmannian,
suitably general to apply to our applications herein. More details
can be found in Doikou \textit{et al.} \cite{paper3} for example.
Any separable Hilbert space is isomorphic to the sequence space
of square summable complex sequences $\ell^2(\CC)$.
We can also assume the sequences in $\ell^2(\CC)$ are parametrised by $\Nb$.
Thus we can represent any sequence $\mathfrak a\in\ell^2(\CC)$ by an
infinite vector $\mathfrak a=(\mathfrak a(1),\mathfrak a(2),\mathfrak a(3),\ldots)^{\mathrm{T}}$ 
where $\mathfrak a(n)\in\CC$ for every $n\in\Nb$.
Square summability of any sequence $\mathfrak a$ means $\sum_{n\in\Nb}\mathfrak a^\ast(n)\mathfrak a(n)<\infty$,
where $\mathfrak a^\ast(n)$ denotes the complex conjugate of $\mathfrak a(n)$.
For any two elements $\mathfrak a, \mathfrak b\in\ell^2(\CC)$,
the natural inner product on $\ell^2(\CC)$ is given by $\mathfrak a^\dag\mathfrak b$,
where $\mathfrak a^\dag$ denotes the complex conjugate transpose of the vector $\mathfrak a$.
Hence a natural canonical orthonormal basis for $\ell^2(\CC)$ are the vectors
$\{\mathfrak e_n\}_{n\in\Nb}$ where $\mathfrak e_n$ is the infinite vector 
with one in the $n^{\mathrm{th}}$ position and zeros elsewhere.
We now assume the underlying separable Hilbert space $\Hb$ and closed subspace $\Vb$
are $\Hb\coloneqq\ell^2_{\mathrm{l}}(\CC)\times\ell^2_\mathrm{r}(\CC)$,
where $\ell^2_{\mathrm{l}}(\CC)$ and $\ell^2_\mathrm{r}(\CC)$ are just independent
copies of $\ell^2(\CC)$, and $\Vb\coloneqq\ell^2(\CC)$.
Suppose we are given a set of independent sequences in $\ell^2_{\mathrm{l}}(\CC)\times\ell^2_\mathrm{r}(\CC)$
which span $\ell^2(\CC)$ and we record them as columns in the infinite matrix
\begin{equation*}
  W=\begin{pmatrix}\id+\mathcal Q\\\mathcal P
  \end{pmatrix}.
\end{equation*}
In other words, each column of $\id+\mathcal Q\in\ell^2_{\mathrm{l}}(\CC)$ 
and each column of $\mathcal P\in\ell^2_{\mathrm{r}}(\CC)$. Assume also for the moment, when we
constructed $\id+\mathcal Q$ we ensured it was a Fredholm operator on $\ell^2_{\mathrm{l}}(\CC)$ 
with $\mathcal Q\in\mathfrak J_2(\ell^2_{\mathrm{l}}(\CC);\ell^2_{\mathrm{l}}(\CC))$. Here 
$\mathfrak J_2(\ell^2(\CC);\ell^2(\CC))$ is the class of Hilbert--Schmidt operators
from $\ell^2(\CC)$ to $\ell^2(\CC)$, equipped with the norm
$\|\mathcal Q\|^2_{\mathfrak J_2}\coloneqq\mathrm{tr}\,\mathcal Q^\dag\mathcal Q$
where `$\mathrm{tr}$' is the trace operator. We also denote by
$\mathfrak J_1(\ell^2(\CC);\ell^2(\CC))$ the class of trace-class operators.
For any operator $\mathcal Q\in\mathfrak{J}_n$, $n=1,2$, we can define
\begin{equation*}
  \mathrm{det}_n(\mathrm{id}+\mathcal Q)
  \coloneqq\exp\Bigg(\sum_{k\geqslant n} \frac{(-1)^{k-1}}{k}\mathrm{tr}(\mathcal Q^k)\Bigg).
\end{equation*} 
This is the Fredholm determinant when $n=1$, or the regularised Fredholm determinant when $n=2$.
For more details, see Simon \cite{S2005}. The operator $\id+\mathcal Q$ is invertible
if and only if $\mathrm{det}_n(\mathrm{id}+\mathcal Q)\neq0$.
We also assume when we constructed $\mathcal P$ we ensured
$\mathcal P\in\mathfrak J_2(\ell^2_{\mathrm{l}}(\CC);\ell_{\mathrm{r}}^2(\CC))$,
the space of Hilbert--Schmidt operators from
$\ell^2_{\mathrm{l}}(\CC)$ to $\ell^2_{\mathrm{r}}(\CC)$.
With this in hand, we denote by $\Wb$ the subspace of
$\ell^2_{\mathrm{l}}(\CC)\times\ell^2_{\mathrm{r}}(\CC)$ spanned by the
columns of $W$. Further let $\Vb_0$ denote the canonical 
subspace with the representation
\begin{equation*}
  V_0=\begin{pmatrix}\id\\ O
  \end{pmatrix},
\end{equation*}
where $\id=\id_{\Vb_0}$ and $O$ is the infinite matrix of zeros.
Now consider the projection of $\Wb$ onto $\Vb_0$. The projections
$\mathrm{pr}\colon\Wb\to\Vb_0$ and $\mathrm{pr}\colon\Wb\to\Vb_0^\perp$ respectively give
\begin{equation*}
  W^\parallel=\begin{pmatrix}\id+\mathcal Q\\O
  \end{pmatrix}
  \quad\text{and}\quad
  W^\perp=\begin{pmatrix}O\\\mathcal P
  \end{pmatrix}.
\end{equation*}
This projection is achievable if and only if $\mathrm{det}_2(\mathrm{id}+\mathcal Q)\neq0$. 
We assume this is the case for the moment, and discuss what happens when this is
not the case momentarily. Hence we see the subspace of $\Hb$ spanned by the columns
of $W^\parallel$ coincides with the subspace spanned by the columns of $V_0$ which is $\Vb_0$.
Indeed the transformation $(\id+\mathcal Q)^{-1}\in\mathrm{GL}(\Vb)$
transforms $W^\parallel$ to $V_0$. Under this transformation,
the representation $W$ for $\Wb$ becomes
\begin{equation*}
  \begin{pmatrix}\id\\\mathcal G
  \end{pmatrix},
\end{equation*}
where $\mathcal G=\mathcal P(\id+\mathcal Q)^{-1}$. Any subspace that can
be projected onto $\Vb_0$ can be represented in this way and vice-versa.
Indeed operators $\mathcal G\in\mathfrak J_2(\ell^2_{\mathrm{l}}(\CC);\ell_{\mathrm{r}}^2(\CC))$
parameterise all the subspaces $\Wb$ that can be projected onto $\Vb_0$.
See Segal and Wilson~\cite{SW} and Pressley and Segal~\cite{PS} for more details.
When $\mathrm{det}_2(\mathrm{id}+\mathcal Q)=0$, and $\Wb$ cannot be projected
onto $\Vb_0$, then $\mathrm{dim}(\Wb\cap\Vb_0^\perp)>0$, and we need to choose
a different representative coordinate patch. Given a subset $\Sb=\{i_1,i_2,\ldots\}\subset\Nb$,
let $\Vb_0(\Sb)$ denote the subspace given by
$\mathrm{span}\{\mathfrak e_{i_1},\mathfrak e_{i_2},\ldots\}$.
From Pressley and Segal~\cite[Prop.~7.1.6]{PS} we know there exists
a set $\Sb\subset\Nb$ such that the projection $\Wb\to\Vb_0(\Sb)$
is an isomorphism. Hence we can carry through the procedure above
with $\Vb_0$ replaced by $\Vb_0(\Sb)$,
representing a different representative coordinate chart.
For more details see Doikou \textit{et al.} \cite{paper3}. For more details
on the implications concerning the connection between singularities
develeping in components of $\mathcal G$, poor representative patches
and the need to choose a different representative coordinate patch,
see Beck \textit{et al.\/ }~\cite{paper,paper2} and the Discussion Section~\ref{sec:discussion}.

With reference to the abstract linear system in Definition~\ref{def:abstractlinearsystem}
we require the operators $P$ and $\p$ and thus their corresponding
kernels to satisfy the partial differential systems involving the
operators $d=d(\pa_x)$ and $\tilde d=\tilde d(\pa_x)$. These are
both assumed to be constant coefficient polynomials of $\pa_x$. 
Looking forward to our actual application linear system in Definition~\ref{appliedlinear}
and results in Theorems~\ref{2nd order} and \ref{3rd order}, 
in each example case we in fact have $\mathrm{deg}(d)=\mathrm{deg}(\tilde d)$.
Assume this is the case for the present discussion.
Let us now consider the Grassmannian construction above in
the context of the applications we consider.
Let us temporarily pull back from the inflated system for $\mathcal P$, $\mathcal Q$
and $\mathcal G$ to the system of equations for $P$, $\p$, $Q$, $\tilde Q$, $G$
and $\tilde G$. We assume the operators $P$ are Hilbert--Schmidt operators
from $L^2(\R;\CC^m)$ to $L^2(\R;\CC^n)$ whose kernels are sufficiently smooth
for the differential equations to make sense. Indeed we suppose $P\in\mathrm{Dom}(d)$,
where $\mathrm{Dom}(d)$ is the subspace of $\mathfrak J_2(L^2(\R;\CC^m);L^2(\R;\CC^n))$
for which the kernels $p$ of the operators $P$ therein are such that
$p\in H^{\mathrm{deg}(d)}(\R^2;\CC^{n\times m})$.
Analogously we suppose $\p\in\mathrm{Dom}(\tilde d)$,
where $\mathrm{Dom}(\tilde d)$ is the subspace of $\mathfrak J_2(L^2(\R;\CC^n);L^2(\R;\CC^m))$
for which the kernels $\tilde p$ of the operators $\p$ therein are such that
$\tilde p\in H^{\mathrm{deg}(d)}(\R^2;\CC^{m\times n})$.
Since $Q=\p P$ and $\tilde Q=P\p$, i.e.\/ they are both the product of two Hilbert--Schmidt operators,
we thus know they are both trace class. We thus assume $Q$ and $\tilde Q$ lie in
the subspaces of $\mathfrak J_1(L^2(\R;\CC^m);L^2(\R;\CC^m))$ and $\mathfrak J_1(L^2(\R;\CC^n);L^2(\R;\CC^n))$,
respectively, for which their corresponding kernels are such that
$q\in H^{\mathrm{deg}(d)}(\R^2;\CC^{m\times m})$ and
$\tilde q\in H^{\mathrm{deg}(d)}(\R^2;\CC^{n\times n})$.
Then, for example, for the inflated system we equivalently assume
$\mathcal P\in\mathrm{Dom}(d)\times\mathrm{Dom}(\tilde d)$ and so forth. 
For appropriate initial data, the following lemma establishes the
existence and uniqueness for some interval of time, of solutions
$G$ and $\tilde G$ to the respective Fredholm equations $P=G(\id+Q)$
and $\p=\tilde G(\id+\tilde Q)$, and thus to $\mathcal P=\mathcal G(\id+\mathcal Q)$.
Recall $\mathfrak J_2$ is a subspace of $\mathfrak J_1$.
A proof can be found in Doikou \textit{et al.\/} \cite{paper3}. 
\begin{lemma}[\textbf{Existence and Uniqueness}]\label{existuniq}
  Assume, for some $T>0$, we know $Q\in C^\infty([0,T];\mathfrak{J}_2(\mathbb{V};\mathbb{V}))$
  and $P\in C^\infty([0,T];\mathrm{Dom}(d))$.  Also assume $\det_2(\id+Q_0)\neq0$.
  Then, there exists a $T'>0$, with $T'\leqslant T$, such that $\det_2(\id+Q(t))\neq0$ for $t\in[0,T']$.
  In particular, there exists a unique solution $G$ to the linear Fredholm equation $P=G(\mathrm{id}+Q)$
  with $G\in C^\infty([0,T'];\mathrm{Dom}(d))$.  
\end{lemma}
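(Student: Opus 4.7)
The plan is to establish the three parts of the conclusion separately: first, the persistence of $\det_2(\id+Q(t))\neq0$ on a positive time interval; second, the existence and uniqueness of $G$ solving $P=G(\id+Q)$; and third, the claimed smooth dependence in $t$ together with the membership $G(t)\in\mathrm{Dom}(d)$. Each step draws on a different standard tool, and the final regularity claim is the one requiring the most work.

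For the non-vanishing of the determinant, I would invoke the continuity—in fact local Lipschitz continuity—of the regularised Fredholm determinant $\det_2\colon\mathfrak{J}_2(\mathbb V;\mathbb V)\to\CC$, a standard result in Simon~\cite{S2005}. Since $t\mapsto Q(t)$ is continuous into $\mathfrak{J}_2$ by hypothesis, composition yields continuity of $t\mapsto\det_2(\id+Q(t))$ on $[0,T]$. The assumption $\det_2(\id+Q_0)\neq0$ combined with elementary continuity then produces some $T'\in(0,T]$ on which this scalar function remains nonzero.

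For existence and uniqueness of $G$, the key ingredient is the equivalence of $\det_2(\id+Q(t))\neq0$ with invertibility of $\id+Q(t)$ as a bounded operator on $\mathbb V$. Setting $G(t)\coloneqq P(t)(\id+Q(t))^{-1}$ then immediately produces a solution of $P=G(\id+Q)$. Uniqueness follows by the standard cancellation argument: any two solutions $G_1,G_2$ satisfy $(G_1-G_2)(\id+Q)=O$, and right-multiplication by the bounded inverse $(\id+Q)^{-1}$ forces $G_1=G_2$.

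For the smoothness in $t$, I would use that the inversion map $A\mapsto A^{-1}$ is $C^\infty$ on the open subset of invertible operators in $B(\mathbb V)$; composition with the $C^\infty$ curve $t\mapsto\id+Q(t)$ yields a $C^\infty$ curve of inverses, and further left-multiplication by the $C^\infty$ curve $P(t)$ preserves this regularity. For the spatial assertion $G(t)\in\mathrm{Dom}(d)$, I would write $G=P-PR$ with $R\coloneqq Q(\id+Q)^{-1}\in\mathfrak{J}_1$ and verify that the integral kernel of $PR$ inherits the required $H^{\mathrm{deg}(d)}(\R^2)$ Sobolev regularity: $y$-derivatives act only on the $P$-factor, which lies in $\mathrm{Dom}(d)$ by hypothesis, while $z$-derivatives act only on the $R$-factor, whose kernel smoothness must be propagated from that of $Q$. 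I expect the main obstacle to be precisely this last verification---tracing how Sobolev regularity of the kernel passes through the bounded inverse operator and the trace-class $R$---and I would handle it, as in Doikou \textit{et al.}~\cite{paper3}, by combining the Hankel structure with a Neumann-type expansion for $(\id+Q)^{-1}$ valid on the subinterval where the determinant stays bounded away from zero.
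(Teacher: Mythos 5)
Your first two stages, and the $t$-smoothness obtained from smoothness of operator inversion on the open set of invertible operators, are correct and follow the standard route; note the paper itself gives no proof of this lemma, deferring to Doikou \textit{et al.} \cite{paper3}, and the parts of your argument that can be checked against that standard route agree with it: local Lipschitz continuity of $\det_2$ on $\mathfrak{J}_2$ (Simon \cite{S2005}), the equivalence of $\det_2(\id+Q(t))\neq0$ with invertibility of $\id+Q(t)$, the explicit solution $G=P(\id+Q)^{-1}$, and uniqueness by composing with the bounded inverse.

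The genuine gap is the final claim, $G\in C^\infty([0,T'];\mathrm{Dom}(d))$, i.e.\ that the kernel of $G(t)$ lies in $H^{\mathrm{deg}(d)}(\R^2)$. Under the hypotheses of this lemma $Q$ is only a smooth $\mathfrak{J}_2(\Vb;\Vb)$-valued path: no Sobolev regularity of its kernel is assumed in either variable. Writing $G=P-PR$ with $R=Q(\id+Q)^{-1}$, the $z$-derivatives of the kernel of $PR$ fall on the kernel of $R$, and a Neumann expansion does not move them off the second slot of $Q$, so the ``smoothness propagated from $Q$'' you appeal to has nothing to propagate. The two structural inputs you invoke are also not available at this level of abstraction: $R\in\mathfrak{J}_1$ would require $Q$ to be trace class (abstractly $Q$ is only Hilbert--Schmidt; it is the applied factorisation $Q=\p P$ that yields $\mathfrak{J}_1$), and the Hankel structure belongs to the applied setting of Lemma~\ref{existuniqpde}, not to this lemma, whose $P$ is an arbitrary element of $\mathrm{Dom}(d)$. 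Indeed a rank-one test case --- $p(y,z)=a(y)b(z)$ with $a,b$ smooth and rapidly decaying, $q(y,z)=\varepsilon\, b(y)\rho(z)$ with real $\rho\in L^2\setminus H^1$ and $\varepsilon$ small, both constant in time --- gives $[G](y,z)=a(y)b(z)-c_\varepsilon\, a(y)\rho(z)$ with $c_\varepsilon\neq0$, whose $z$-derivative is not square integrable. So the spatial $\mathrm{Dom}(d)$ assertion cannot be closed from the stated hypotheses alone; it needs the additional kernel regularity of $Q$ that the applications supply through $Q=\p P$ with smooth Hankel kernels (cf.\ Lemma~\ref{existuniqpde}(ii)), which is the setting of the proof cited in \cite{paper3}. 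Your deferral of ``the main obstacle'' therefore leaves unproved precisely the one step that is not routine.
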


\section{Application to integrable PDEs}\label{sec:applications}

\subsection{Existence and Uniqueness results}
Before we can proceed to the derivation of the nonlinear PDEs from the linear system,
we need to establish some existence and uniqueness results.
To begin we introduce some notation we require. 
For $w:\R\to\mathbb{R}_+$, we denote by $L^2_w(R;\CC^{n\times m})$
the space of functions $f:\R\to\CC^{n\times m}$ whose $L^2$ norm weighted by $w$ is finite, that is,
\begin{equation*}
  \|f\|_{L^2_w} \coloneqq\mathrm{tr}\,\int_{\R} f^\dag(x)f(x)w(x)\,\mathrm{d}x < \infty,
\end{equation*} 
where $f^\dag$ denotes the complex-conjugate transpose of $f$ and `$\mathrm{tr}$' is the trace
operator giving the sum of the diagonal elements of a matrix. We set 
\begin{equation*}
  H(\R;\CC^{n\times m})\coloneqq \underset{l\in\{0\}\cup\mathbb{N}}\bigcap H^l(\R;\CC^{n\times m}),
\end{equation*}
to denote the Sobolev space of all functions which themselves, as well as their derivatives,
are square integrable. Any such functions are thus smooth. 
Further, for any given integrable function $f=f(x)$,
we denote its Fourier transform by $\mathfrak f=\mathfrak f(k)$
and define its inverse by the formulae 
\begin{equation*}
\mathfrak f(k)\coloneqq\int_{\mathbb R} f(x)\mathrm{e}^{2\pi\mathrm{i}kx}\,\mathrm{d} x
\qquad\text{and}\qquad
f(x)\coloneqq\int_{\mathbb R} \mathfrak f(k)\mathrm{e}^{-2\pi\mathrm{i}kx}\,\mathrm{d} k.
\end{equation*}
We now establish regularity results for the linear PDEs satisfied
by the integral kernels $p$ and $\tilde{p}$, of $P$ and $\p$ respectively, i.e.\/ for 
\begin{equation*}
\begin{aligned}
\pa_t{p} &= d(\pa_x)p\\
\pa_t\tilde{p} &= \tilde{d}(\pa_x)\tilde{p}.
\end{aligned}
\end{equation*}
We assume $d=d(\pa_x)$ and $\tilde{d}=\tilde{d}(\pa_x)$ are scalar polynomials of $\pa_x$
satisfying $(d(\mathrm{i}\kappa))^\ast=-d(\mathrm{i}\kappa)$ for all $\kappa\in\R$.
The regularity results ensure these kernels generate Hilbert--Schmidt operators $P$ and $\p$, respectively.
The original proof of the following lemma for the scalar case can be found in Doikou \textit{et al.} \cite{paper3}.
We include the matrix adapted version below for completeness.
\begin{lemma}[\textbf{Dispersive linear PDE properties}]\label{disp}
Assume $p=p(x;t)$ is a solution to the general dispersive linear partial differential equation above.
Let $w=w(\partial)$ denote an arbitrary polynomial function of $\partial=\partial_x$
with constant non-negative coefficients, whose Fourier transform
we denote by $\mathfrak{w}=\mathfrak{w}(k)$, while $W:\mathbb{R}\to\mathbb{R}_+$ denotes
the specific function $W:x\mapsto 1+x^2$.
Then, $p$ and its Fourier transform $\mathfrak{p}=\mathfrak{p}(k;t)$
satisfy the following properties for all $k\in\mathbb{R}$ and $t\geqslant0$
($d'$ below is the derivative of $d$):
\begin{enumerate}
\item[(i)] $\|\partial\mathfrak{p}\|_{L^2}^2 \leqslant (2\pi)^2\|W^{1/2}p\|_{L^2}^2;$
\item[(ii)] $p(0)\in H(\mathbb{R};\CC^{n\times m}) \Rightarrow \mathfrak{p}(0)\in L_\mathfrak{w}^2(\mathbb{R};\CC^{n\times m});$
\item[(iii)] $p(0)\in L_W^2(\mathbb{R};\CC^{n\times m}) \Rightarrow \mathfrak{p}(0)\in H^1(\mathbb{R};\CC^{n\times m});$
\item[(iv)] $\mathfrak{p}(k;t) = e^{td(2\pi ik)}\mathfrak{p}(k;0);$
\item[(v)] $\mathfrak{p}^\dag(k;t)\mathfrak{p}(k;t) = \mathfrak{p}^\dag(k;0)\mathfrak{p}(k;0);$
\item[(vi)] $\|w(\partial)p(t)\|_{L^2}^2 = \|\mathfrak{w}\mathfrak{p}(t)\|_{L^2}^2
  = \|\mathfrak{w}\mathfrak{p}(0)\|_{L^2}^2 = \|w(\partial)p(0)\|_{L^2}^2;$
\item[(vii)] $p(0)\in H(\mathbb{R};\CC^{n\times m}) \Rightarrow p(t)\in H(\mathbb{R};\CC^{n\times m});$
\item[(viii)] $\|\partial\mathfrak{p}(t)\|_{L^2}^2 \leqslant
  2((2\pi)^2t^2\|d'\mathfrak{p}(0)\|_{L^2}^2 + \|\partial\mathfrak{p}(0)\|_{L^2}^2)$;
\item[(ix)] $\mathfrak{p}(0)\in H^1(\mathbb{R};\CC^{n\times m})\cap L_{(d')^2}^2(\mathbb{R};\CC^{n\times m})
  \Rightarrow \mathfrak{p}(t)\in H^1(\mathbb{R};\CC^{n\times m});$
\item[(x)] $\|W^{1/2}p(t)\|_{L^2}^2 = \|p(0)\|_{L^2}^2 + (2\pi)^{-2}\|\partial\mathfrak{p}(t)\|_{L^2}^2;$
\item[(xi)] $p(0)\in L_W^2(\mathbb{R};\CC^{n\times m})\cap H^{\mathrm{deg}(d')}(\mathbb{R};\CC^{n\times m})
  \Rightarrow p(t)\in L_W^2(\mathbb{R};\CC^{n\times m});$
\item[(xii)] $p(t)\in L_W^2(\mathbb{R};\CC^{n\times m})$ $\Rightarrow$ Hankel $P(t)\in\mathfrak{J}_2$.
\end{enumerate}
\end{lemma}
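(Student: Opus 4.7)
The plan is to follow the scalar proof of Doikou \emph{et al.\/} \cite{paper3} essentially verbatim, using that $d$ is a \emph{scalar} polynomial so the evolution $\pa_t p=d(\pa_x)p$ acts entrywise on the matrix-valued kernel, and inserting the trace whenever a pointwise $|\,\cdot\,|^2$ must become $\mathrm{tr}(f^\dag f)$. The key analytic input is Plancherel together with the symmetry $(d(\mathrm{i}\kappa))^\ast=-d(\mathrm{i}\kappa)$, which gives that $d(2\pi\mathrm{i}k)\in\mathrm{i}\R$ and hence $|\mathrm{e}^{td(2\pi\mathrm{i}k)}|=1$ for every $k\in\R$ and $t\geqslant0$.

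I would organise the twelve items in four clusters. First, the Fourier-weight identities (i), (iii), (x): these rest on the single identity $\pa_k\mathfrak p(k)=-2\pi\mathrm{i}\widehat{xp}(k)$, which combined with Plancherel yields $\|\pa\mathfrak p\|_{L^2}^2=(2\pi)^2\|xp\|_{L^2}^2$; bounding $\|xp\|_{L^2}^2\leqslant\|W^{1/2}p\|_{L^2}^2$ gives (i), the reverse bookkeeping $\|W^{1/2}p\|_{L^2}^2=\|p\|_{L^2}^2+\|xp\|_{L^2}^2$ gives (iii) and (x). Second, the dispersive core (iv)--(vii): Fourier transforming the PDE diagonalises it, so $\mathfrak p(k;t)=\mathrm{e}^{td(2\pi\mathrm{i}k)}\mathfrak p(k;0)$, which is (iv); since $d(2\pi\mathrm{i}k)$ is purely imaginary the multiplier is a unit complex scalar, so $\mathfrak p^\dag\mathfrak p$ is preserved pointwise in $k$, giving (v); Plancherel applied to $w(\pa)p$ yields (vi); and (vii) follows by letting $w$ range over all monomials in $\pa$.

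Third, the weighted derivative estimates (ii), (viii), (ix): (ii) is simply Parseval applied to $w(\pa)p(0)\in L^2$, recognising $\mathfrak w\mathfrak p(0)$ as its Fourier transform; for (viii), differentiating (iv) in $k$ gives
\begin{equation*}
\pa_k\mathfrak p(k;t)
=\mathrm{e}^{td(2\pi\mathrm{i}k)}\bigl(2\pi\mathrm{i}t\,d'(2\pi\mathrm{i}k)\mathfrak p(k;0)+\pa_k\mathfrak p(k;0)\bigr),
\end{equation*}
whence $|\mathrm{e}^{td(2\pi\mathrm{i}k)}|=1$ together with $(a+b)^2\leqslant 2a^2+2b^2$ gives the stated bound; and (ix) combines (viii) with the pointwise identity from (v) to control $\|\mathfrak p(t)\|_{H^1}$ by data-side quantities. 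Fourth, (xi) follows from (x), (viii) and (vi): assuming $p(0)\in L_W^2\cap H^{\mathrm{deg}(d')}$ makes both $\|\pa\mathfrak p(0)\|_{L^2}$ and $\|d'\mathfrak p(0)\|_{L^2}$ finite (the latter because, by Plancherel, it equals $\|d'(\pa)p(0)\|_{L^2}$), which through (viii) bounds $\|\pa\mathfrak p(t)\|_{L^2}$ and hence through (x) bounds $\|W^{1/2}p(t)\|_{L^2}$.

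The remaining item (xii) is where I expect the only genuinely non-Fourier computation, and the main, albeit mild, obstacle. The Hilbert--Schmidt norm of the Hankel operator with kernel $p(y+z+x;t)$ over $(-\infty,0)^2$ equals, after the change of variables $u=y+z$, $v=y-z$ (Jacobian $1/2$, region $u\leqslant 0$, $v\in[u,-u]$) and then $s=u+x$,
\begin{equation*}
\|P(t)\|_{\mathfrak J_2}^2
=\int_{-\infty}^{x}(x-s)\,\mathrm{tr}\bigl(p^\dag(s;t)p(s;t)\bigr)\,\mathrm{d}s,
\end{equation*}
and the pointwise bound $(x-s)\leqslant C_x(1+s^2)$ for $s\leqslant x$ identifies this with a multiple of $\|W^{1/2}p(t)\|_{L^2}^2$, yielding (xii). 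The matrix case differs from the scalar one only in the appearance of $\mathrm{tr}(p^\dag p)$ in place of $|p|^2$, so scalar inequalities transfer without change, which is why I view the overall adaptation as cosmetic once the Hankel HS computation is done carefully.
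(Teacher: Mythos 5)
Your proposal is correct and follows essentially the same route as the paper: Fourier diagonalisation of the dispersive equation with the unit-modulus multiplier $\mathrm{e}^{td(2\pi\mathrm{i}k)}$, Plancherel for the norm identities, differentiation of the explicit Fourier solution for (viii), and the change of variables $u=y+z$, $v=y-z$ for the Hankel Hilbert--Schmidt bound in (xii). The only cosmetic difference is that you keep the $x$-shift explicit in (xii) and absorb it into a constant $C_x$ via $(x-s)\leqslant C_x(1+s^2)$, whereas the paper bounds $2|\xi|$ by $W(\xi)$ directly; both are fine.
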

\begin{proof}
Let $V\colon\R\to\R_+$ denote the function $V\colon x\mapsto x^2$.
We establish the results in order as follows: 
(i) By the Plancherel Theorem and the definition of the Fourier transform
we observe $\|\pa\pf\|_{L^2}^2=(2\pi)^2\|V^{1/2}p\|_{L^2}^2$ which we 
then combine with the fact $\|V^{1/2}p\|_{L^2}^2\leqslant\|W^{1/2}p\|_{L^2}^2$;
(ii) This follows again by the Plancherel Theorem and standard properties
of the Fourier transform;
(iii) This follows from (i) applied at time $t=0$;
(iv) and (v) These follow by directly solving the linear differential
equation for the general dispersive equation in Fourier space;
(vi) The first and third equalities follow by the Plancherel theorem, 
while the second uses (v); (vii) Follows from (vi) and that $w$ is arbitrary;
(viii) The derivative with respect to $k$ of the explicit solution from (iv)
generates 
$\pa\pf(t)=\mathrm{e}^{td(2\pi\mathrm{i}k)}(t(2\pi\mathrm{i})d^\prime\pf(k;0)+\pa\pf(k;0))$,
where $d^\prime$ denotes the derivative of $d$. Taking the complex conjugate
of this and using both expressions to expand $\pa\pf^\dag(t)\pa\pf(t)$
generates the inequality shown when we integrate with respect to $k$ and 
use the Cauchy--Schwarz and Young inequalities;
(ix) Follows from (viii); (x) We observe 
$\|W^{1/2}p(t)\|_{L^2}^2
=\|p(t)\|_{L^2}^2+\|V^{1/2}p(t)\|_{L^2}^2=\|p(0)\|_{L^2}^2+(2\pi)^{-2}\|\pa\pf(t)\|_{L^2}^2$,
where we used the equality stated in the proof of (i) just above;
(xi) This follows from (iii), (ix) and (x); and finally (xii)
By a standard change of variables $\xi=y+z$ and $\eta=y-z$ 
we have (see for example Power~\cite{Power})
\begin{align*}
\int_{-\infty}^0\int_{-\infty}^0p^\dag(y+z;t)p(y+z;t)\,\rd y\,\rd z
&=\int_{-\infty}^0\int_{\xi}^{-\xi}p^\dag(\xi;t)p(\xi;t)\,\rd \eta\,\rd\xi\\
&=\int_{-\infty}^0(2|\xi|)p^\dag(\xi;t)p(\xi;t)\,\rd\xi.
\end{align*}
Taking the trace, the right-hand side is bounded by $\|W^{1/2}p(t)\|_{L^2}^2$.
Hence we see if $p(t)\in L^2_W(\R;\CC^{n\times m})$ then we observe Hankel $P(t)\in\mathfrak J_2$.
\end{proof}
Finally, we establish the sense in which a solution generated by the linear system exists.
\begin{lemma}[\textbf{Existence and Uniqueness: unified PDE presciption}]\label{existuniqpde}
Assume $p_0\in H(\R;\CC^{n\times m})\cap L_W^2(\R;\CC^{n\times m})$,
$\tilde{p}_0\in H(R;\CC^{m\times n})\cap L_W^2(\R;\CC^{m\times n})$
and $\det(\mathrm{id}+Q(x;0))\neq0$. Then, there exists a $T>0$ such that, for each $t\in[0,T]$
and $x\in\mathbb{R}$, we have:

(i) The solutions $p=p(y+x;t)$ and $\tilde{p}=\tilde{p}(y+x;t)$ to
the respective linear partial differential equations $\partial_t p=d(\partial)p$
and $\partial_t\tilde{p}=\tilde{d}(\partial)\tilde{p}$,
are such that $p(\cdot+x;t)\in H(\R;\CC^{n\times m})\cap L_W^2(\R;\CC^{n\times m})$
and $\tilde{p}(\cdot+x;t)\in H(\R;\CC^{m\times n})\cap L_W^2(\R;\CC^{m\times n})$
with $p(x;0)=p_0(x)$ and $\tilde{p}(x;0)=\tilde{p}_0(x)$. Thus $P(x;t)\in\mathfrak{J}_2$
and $\p(x;t)\in\mathfrak{J}_2$ and are smooth functions of $x$ and $t$;

(ii) The kernel function corresponding to $Q$ given by
\begin{equation*}
q(y,z;x,t) = \int_{-\infty}^0 \tilde{p}(y+\xi+x;t)p(\xi+z+x;t)\,\mathrm{d}\xi,
\end{equation*} 
is such that $Q(x,t)\in\mathfrak{J}_1$ and is a smooth function of $x$ and $t$;

(iii) $\det(\mathrm{id}+Q(x;t))\neq0$;

(iv) There exists a unique $g\in C^\infty([0,T];C^\infty(\R_-^{\times2}\times\R;\CC^{n\times m}))$
which satisfies the linear Fredholm equation 
\begin{equation*}
  p(y+z+x;t) = g(y,z;x,t) + \int_{-\infty}^0 g(y,\xi;x,t)q(\xi,z;x,t)\,\mathrm{d}\xi.
\end{equation*}
\end{lemma}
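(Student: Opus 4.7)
The plan is to verify the four parts in order, leaning on Lemma~\ref{disp} and Lemma~\ref{existuniq} as the workhorses, with the crux of the argument being the uniform non-vanishing of the Fredholm determinant in the parameter $x$. For parts~(i) and (ii), the functions $p(y+x;t)$ and $\tilde p(y+x;t)$ are spatial translates of solutions to the dispersive linear PDEs $\pa_t p=d(\pa_x)p$ and $\pa_t\tilde p=\tilde d(\pa_x)\tilde p$, so applying items (vii) and (xi) of Lemma~\ref{disp} componentwise to the matrix-valued kernels yields $p(\cdot;t)\in H(\R;\CC^{n\times m})\cap L_W^2(\R;\CC^{n\times m})$ and the analogous statement for $\tilde p$, while item (xii) delivers $P(x;t),\p(x;t)\in\mathfrak J_2$. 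Smoothness in $t$ follows from the differential equations themselves, since each $\pa_t$ is replaced by a polynomial in $\pa_x$ that preserves the Sobolev class $H$; smoothness in $x$ is immediate from that of the kernel in its single argument. Since $Q=\p P$ is the composition of two Hilbert--Schmidt operators it belongs to $\mathfrak J_1$, and writing out the composition explicitly in terms of the Hankel kernels produces the stated formula for $q(y,z;x,t)$, with smoothness in $(x,t)$ inherited from that of $p$ and $\tilde p$.

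Part~(iii) is the step I expect to be the main obstacle. Pointwise in $x$, continuity of the Fredholm determinant on $\mathfrak J_1$ together with continuity in $t$ of $Q(x;\cdot)$ yields, for each $x$, a local non-vanishing time window. The challenge is producing a single $T>0$ that works uniformly for all $x\in\R$. The plan is to exploit the spatial decay encoded in $p(\cdot;t),\tilde p(\cdot;t)\in L_W^2$: via the convolution formula for $q$ from (ii) and the trace-class bound $\|Q(x;t)\|_{\mathfrak J_1}\leqslant\|\p(x;t)\|_{\mathfrak J_2}\|P(x;t)\|_{\mathfrak J_2}$, together with the $L_W^2$-based estimate for the Hilbert--Schmidt norm of a Hankel operator from the proof of Lemma~\ref{disp}(xii), one sees $\|Q(x;t)\|_{\mathfrak J_1}\to0$ as $|x|\to\infty$ uniformly on compact time intervals. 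Consequently $\det(\id+Q(x;t))\to1$ uniformly in $t$ as $|x|\to\infty$, and combining this with joint continuity in $(x,t)$ and the hypothesis $\det(\id+Q(x;0))\neq0$ on any compact segment $|x|\leqslant R$, a compactness argument yields a common interval $[0,T]$ on which $\det(\id+Q(x;t))$ is bounded away from zero uniformly in $x$.

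Finally, for part~(iv), once (i)--(iii) are in hand the hypotheses of Lemma~\ref{existuniq} are satisfied pointwise in $x$, delivering the unique solution $G$ of the linear Fredholm equation $P=G(\id+Q)$ in the appropriate class. Its kernel $g=[G]$ then satisfies the Marchenko-type equation stated in (iv), which is simply the kernel-level form of $P=G(\id+Q)$. The declared regularity $g\in C^\infty([0,T];C^\infty(\R_-^{\times2}\times\R;\CC^{n\times m}))$ follows by inheriting the smoothness of $p$ and $q$ in all variables and using that Fredholm inversion preserves smooth dependence on parameters whenever the determinant remains bounded away from zero on the relevant compact set.
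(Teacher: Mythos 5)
Your treatment of parts (i), (ii) and (iv) matches the paper's proof: (i) follows from Lemma~\ref{disp} (iv), (vii), (xi), (xii) applied to the Hankel kernels, (ii) from the ideal property $\|\p P\|_{\mathfrak J_1}\leqslant\|\p\|_{\mathfrak J_2}\|P\|_{\mathfrak J_2}$ together with the explicit composition of kernels, and (iv) from the abstract Existence and Uniqueness Lemma~\ref{existuniq} once (i)--(iii) are in place, noting $Q\in\mathfrak J_1\subset\mathfrak J_2$. The divergence is in part (iii), where the paper argues much more simply: $Q(x;t)$ is smooth in $(x,t)$, the determinant is continuous on $\mathfrak J_1$, and $\det(\id+Q(x;0))\neq0$, so there is a $T'>0$ with $\det(\id+Q(x;t))\neq0$ on $[0,T']$, after which $T$ is reset to $T'$ if necessary; no spatial-infinity compactness argument is attempted.

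The gap in your version of (iii) is the claim that $\|Q(x;t)\|_{\mathfrak J_1}\to0$ as $|x|\to\infty$, hence $\det(\id+Q(x;t))\to1$ uniformly. This fails in the direction $x\to+\infty$. The change of variables in the proof of Lemma~\ref{disp}(xii) gives
\begin{equation*}
\|P(x;t)\|_{\mathfrak J_2}^2=\int_{-\infty}^{x}2|x-\xi|\,\mathrm{tr}\,p^\dag(\xi;t)p(\xi;t)\,\mathrm{d}\xi,
\end{equation*}
which does tend to zero as $x\to-\infty$ (for $\xi\leqslant x\leqslant 0$ one has $|x-\xi|\leqslant|\xi|$ and $p(t)\in L^2_W$), but grows essentially linearly in $x$ as $x\to+\infty$; the same holds for $\p$. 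More directly, substituting $\eta=\xi+x$ in the formula of part (ii) gives $q(y,z;x,t)=\int_{-\infty}^{x}\tilde p(y+\eta;t)p(\eta+z;t)\,\mathrm{d}\eta$, which converges as $x\to+\infty$ to the generally nonzero kernel $\int_{\R}\tilde p(y+\eta;t)p(\eta+z;t)\,\mathrm{d}\eta$; so $Q(x;t)$ tends to a nontrivial limiting operator, and its determinant has no reason to approach $1$. Consequently your compactness argument does not close on the right half-line: you would either have to treat $x\to+\infty$ separately via convergence of $Q(x;t)$ to this limit (which needs the additional fact that the limiting determinant is nonzero at $t=0$, not implied by the stated hypothesis), or simply fall back on the paper's continuity-in-$(x,t)$ argument from the hypothesis $\det(\id+Q(x;0))\neq0$.
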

\begin{proof}
  (i) The time regularity of $p$ follows from the spatial regularity assumed
  on the initial data $p_0$ and Lemma \ref{disp} (iv). The regularity of $p=p(\cdot+x;t)$
  with respect to $x$ follows from Lemma \ref{disp} (vii) and the Hankel assumption.
  Thus, from Lemma \ref{disp} (xi) and (xii), we deduce $P(x;t)\in\mathfrak{J}_2$
  and is a smooth function of $x$ and $t$. The same arguments apply for $\tilde{p}$ and $\p$.
  (ii) Since $P(x;t),\p(x;t)\in\mathfrak{J}_2$, by the Hilbert--Schmidt ideal property
  we have $\|\p P\|_{\mathfrak{J}_1}\leqslant\|\p\|_{\mathfrak{J}_2}\|P\|_{\mathfrak{J}_2}$,
  and hence $Q=\p P\in\mathfrak{J}_1$ for every $x\in\mathbb{R}$ and $t\in[0,T]$.
  (iii) Since $P(x;t),\p(x;t)$ are smooth in $x,t$ so is $Q(x;t)$. Hence, since $\det(\mathrm{id}+Q(x;0))\neq0$,
  there exists $T'>0$ such that $\det(\mathrm{id}+Q(x;t))\neq0$ for $t\in[0,T']$; if $T'<T$ we reset $T$ to be $T'$.
  (iv) This is established using the corresponding abstract result
  in the Existence and Uniqueness Lemma~\ref{existuniq}
  and noting $Q=\p P\in\mathfrak{J}_1\subset\mathfrak{J}_2$.
\end{proof}

\subsection{Matrix nonlinear Schr\"odinger system}\label{NLS}
We now restrict our choice of operators $d$ and $\tilde{d}$. Consider
the following ``application'' linear system.
\begin{definition}[Application linear system]\label{appliedlinear}
Suppose the linear operators $P$, $\p$, $Q$ and $G$ satisfy the linear system of equations,
\begin{align*}
\pa_t{P}&=\mu_1\pa_x^2 P+\mu_2\pa_x^3 P\\
\pa_t{\p}&=\tilde{\mu}_1\pa_x^2 \p+\tilde{\mu}_2\pa_x^3\p\\
Q&=\p P\\
P&=G(\mathrm{id}+Q),
\end{align*}
where the constant parameters $\mu_1,\mu_2,\tilde{\mu}_1,\tilde{\mu}_2\in\mathbb C$.
Recall we also set $\tilde Q=P\p$ and define $\tilde{G}$
as the solution to the linear equation $\p=\tilde{G}(\mathrm{id}+\tilde Q)$.
\end{definition}
With regard to the parameters $\mu_j$ and $\tilde{\mu}_j$, a priori these can be regarded as arbitrary complex numbers.
However, as it turns out, the structure of the computations needed to prove the following two theorems
is such that we eventually require $\tilde{\mu}_j=\pm\mu_j$. For the moment
we also distinguish between the second-order and third-order cases. Thus herein and in Section~\ref{mKdV}
we impose $\mu_1\mu_2=0$. As we show below, special values of $\mu_j$ and $\tilde{\mu}_j$
yield specific well-known integrable systems in these cases. The case when both $\mu_1$ and
$\mu_2$ are non-zero is treated in Section~\ref{sec:mixandmatch}.

We now assume $\mu_2=\tilde{\mu_2}=0$. In Section~\ref{mKdV} we consider the case $\m=\tm=0$. 
We state and prove our first result which leads to the matrix NLS. Recall Corollary~\ref{cor:Vid}. 
\begin{theorem}[\textbf{Second-order decomposition}]\label{2nd order}
Assume the Hilbert--Schmidt operators $P$, $\p$, $Q$ and $G$ satisfy the application linear system
in Definition \ref{appliedlinear} and their corresponding kernels
satisfy the assumptions of Lemma~\ref{existuniqpde}.
Set $\p=P^\dagger$, the adjoint of the operator $P$, and $\m=-\mathrm{i}$.
Then, for some $T>0$, the integral kernel $g=g(y,z;x,t)$ corresponding to $G$,
for every $t\in[0,T]$ satisfies the matrix kernel NLS equation:
\begin{equation*}
\mathrm{i}\partial_t g(y,z;x,t) = \partial_x^2g(y,z;x,t) + 2g(y,0;x,t)g^\dag(0,0;x,t)g(0,z;x,t),
\end{equation*}
where here $g^\dag$ now denotes the complex conjugate transpose of the matrix $g$.
In particular, $\langle G\rangle(x,t)\coloneqq g(0,0;x,t)$ satisfies the matrix NLS equation:
\begin{equation*}
\mathrm{i}\pa_t\langle G\rangle
=\pa_x^2\langle G\rangle+2\langle G\rangle\langle G\rangle^\dag\langle G\rangle.
\end{equation*}
\end{theorem}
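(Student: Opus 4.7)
The plan is to derive a partial differential equation for $G$ directly from the Riccati relation $P=G(\id+Q)$ together with the evolution equations for $P$ and $\tilde P$, simplify it using the $U$-identities of Lemma~\ref{Uid} and Corollary~\ref{cor:Vid}, and then apply the kernel product rule twice. The pivotal algebraic collapse is the identity $G\tilde P=\id-V$ with $V=(\id+P\tilde P)^{-1}$; it converts the expression obtained from the linear system into a form to which the product rule applies cleanly and produces the cubic nonlinearity.

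First I would differentiate $G(\id+Q)=P$ once in time and twice in space, obtaining $G_t=(P_t-GQ_t)U$ and $G_{xx}=(P_{xx}-2G_xQ_x-GQ_{xx})U$. The assumption $\tilde P=P^\dag$ combined with $\mu_1=-\mathrm{i}$ forces $\partial_t\tilde P=\mathrm{i}\tilde P_{xx}$, so $Q_t=\mathrm{i}\tilde P_{xx}P-\mathrm{i}\tilde PP_{xx}$ and $Q_{xx}=\tilde P_{xx}P+2\tilde P_xP_x+\tilde PP_{xx}$. Substituting these into $\mathrm{i}G_t-G_{xx}$, the $P_{xx}U$ contributions cancel and the $G\tilde PP_{xx}U$ contributions cancel, leaving
\begin{equation*}
\mathrm{i}G_t-G_{xx}=2G\,\partial_x(\tilde P_xP)\,U+2G_x\,\partial_x(\tilde PP)\,U.
\end{equation*}

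Since $P$, $\tilde P$ and $\tilde P_x$ are all Hankel, I would apply the kernel product rule to each summand, giving
\begin{equation*}
[\mathrm{i}G_t-G_{xx}](y,z;x,t)=2\bigl([G\tilde P_x]+[G_x\tilde P]\bigr)(y,0;x,t)\,g(0,z;x,t)=2\bigl[\partial_x(G\tilde P)\bigr](y,0;x,t)\,g(0,z;x,t),
\end{equation*}
where the second equality uses $\partial_x(G\tilde P)=G_x\tilde P+G\tilde P_x$. The crucial step is now to rewrite $\partial_x(G\tilde P)$ via algebraic collapse: by Corollary~\ref{cor:Vid}, $G\tilde P=PU\tilde P=P\tilde G$, and then by Lemma~\ref{Uid}(ii) applied with $F=\tilde Q=P\tilde P$ one has $P\tilde G=P(\tilde PV)=\tilde QV=\id-V$. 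Hence $\partial_x(G\tilde P)=-V_x$. Lemma~\ref{Uid}(i) gives $-V_x=V\,\partial_x(P\tilde P)\,V$, and a second application of the kernel product rule, now with Hankel factors $P$ and $\tilde P$, yields
\begin{equation*}
[-V_x](y,0;x,t)=[VP](y,0;x,t)\,[\tilde PV](0,0;x,t)=g(y,0;x,t)\,\tilde g(0,0;x,t).
\end{equation*}

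Finally, $\tilde P=P^\dag$ makes $Q=\tilde PP$ self-adjoint, so $U^\dag=U$ and $\tilde g=[U\tilde P]=[(PU)^\dag]$, which evaluates to $g^\dag$ at $(0,0)$. Combining all the pieces gives $[\mathrm{i}G_t-G_{xx}](y,z;x,t)=2\,g(y,0;x,t)\,g^\dag(0,0;x,t)\,g(0,z;x,t)$, which is the matrix kernel NLS equation; specialising via the observation functional at $y=z=0$ delivers the stated local matrix NLS for $\langle G\rangle$. Existence and uniqueness of $g$ on some interval $[0,T]$ is already provided by Lemma~\ref{existuniqpde}. The main obstacle I anticipate is spotting the identification $G\tilde P=\id-V$ at the right moment; without it, one is left with an awkward combination of mixed derivatives of $U$, $P$ and $\tilde P$ that does not obviously assemble into the desired product-rule form, and the cubic nonlinearity fails to crystallise.
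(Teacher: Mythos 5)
Your proposal is correct and follows essentially the same route as the paper's proof: the reduction to $2G\partial_x(\tilde P_xP)U+2G_x\partial_x(\tilde PP)U$ reproduces the paper's Step~1 identity, and the two applications of the kernel product rule hinging on $G\p=PU\p=\mathrm{id}-V$ and $V_x=-V(P\p)_xV$, followed by $\tilde G=G^\dag$, are exactly the paper's Steps~2--3. The only differences are cosmetic: you differentiate the Riccati relation $P=G(\mathrm{id}+Q)$ directly instead of applying Leibniz to $G=PU$ with the $U$-identities, fix $\m=-\mathrm{i}$ from the outset rather than keeping $\tm=-\m$ general, and use $U^\dag=U$ in place of $V^\dag=V$ to identify $\tilde G=G^\dag$.
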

\begin{proof}
We split the proof into three steps.\smallskip
  
\emph{Step 1: Apply the linear dispersion operator ro $G=PU$.}
With $G=PU$, using the Leibniz rule, that $P_t=\mu_1 P_{xx}$ and $\p_t=\tm\p_{xx}$,
and the identities $\pa U=-U(\pa Q)U$ and $U_{xx}=-2U_xQ_xU-UQ_{xx}U$
from Lemma~\ref{Uid} in (i) and (iv) with $F=Q$, we compute
\begin{align*}
\pa_t G-\mu_1\pa_x^2 G
&=P_tU-PUQ_tU-\mu_1\bigl(P_{xx}U+2P_xU_x+PU_{xx}\bigr)\\
&=-PU(Q_t-\mu_1Q_{xx})U+2\mu_1\bigl(P_xUQ_xU+PU_xQ_xU\bigr).
\end{align*}
We now set $\tm=-\mu_1$ and assume this holds hereafter.
Since $Q\coloneqq\p P$, by direct computation we have
\begin{align*}
Q_t-\mu_1 Q_{xx}&=\tm\p_{xx}P+\m\p P_{xx}-\m\bigl(\p_{xx}P+2\p_x P_x+\p P_{xx}\bigr)\\
&=(\tm-\m)\p_{xx}P-2\m\p_xP_x\\
&=-2\m(\p_xP)_x.
\end{align*}
Substituting this result into the previous one and using $Q=\p P$, we get,
\begin{equation*}
\partial_t G - \mu_1\partial_x^2 G = 2\m\big(PU(\p_xP)_xU + P_xU(\p P)_xU + PU_x(\p P)_xU\big).
\end{equation*}\smallskip

\emph{Step 2: Apply the kernel bracket operator.}
We now consider the corresponding kernels, denoted by $[\cdot]$. In other words
applying the kernel bracket to the final relation in Step~1 above, and using the
kernel bracket product rule, we find,
\begin{align*}
\partial_t[G](y,z) &- \m\partial_x^2[G](y,z)\\
&= 2\m\big([PU\p_x](y,0)[PU](0,z) + [P_xU\p](y,0)[PU](0,z)\\
&\quad + [PU_x\p](y,0)[PU](0,z)\big)\\
&= 2\m[(PU\p)_x](y,0)[PU](0,z).
\end{align*}
Now, the identity $\mathrm{id} - V = VP\p = PU\p$, gives $V_x = -(PU\p)_x$.
But also, by the definition of $V$, we have $V_x = -V(P\p)_xV$.
Hence, applying the kernel bracket product rule once more we get,
\begin{align*}
\partial_t[G](y,z) - \m\partial_x^2[G](y,z) &= 2\m[V(P\p)_xV](y,0)[PU](0,z)\\
&= 2\m[VP](y,0)[\p V](0,0)[PU](0,z)\\
&= 2\m[G](y,0)[\tilde{G}](0,0)[G](0,z).
\end{align*}\smallskip

\emph{Step 3: Choose $\p$ and $\m$ for the matrix kernel NLS equation.}  
We now choose $\p=P^\dagger$ and $\m=-\mathrm{i}$, so $\tm=\mathrm{i}$,
where $P^\dagger$ denotes the adjoint of the complex-valued Hilbert--Schmidt operator $P$.
This choice is consistent with the earlier choice of $\tm=-\m$ and the linear partial
differential equation for $\p$.
In this case, $V^\dagger=V$ and hence, $\tilde{G}=P^\dagger V=(VP)^\dagger=G^\dagger$.
The operator $G^\dagger$ has kernel $g^\dag(z,y;x,t)$, where the former is the
adjoint of the operator $G$ and the latter is the
complex conjugate transpose of its matrix kernel $g$.
Hence we conclude $g$ satisfies the matrix kernel NLS equation,
\begin{equation*}
  \mathrm{i}\partial_t g(y,z;x,t) = \partial_x^2g(y,z;x,t) + 2g(y,0;x,t)g^\dag(0,0;x,t)g(0,z;x,t).
\end{equation*}
Further, the function $\langle G\rangle=\langle G\rangle(x,t)$, where
$\langle G\rangle(x,t)=g(0,0;x,t)$, satisfies the matrix local NLS equation
stated in the theorem.
\end{proof}
\begin{remark}\label{rmk:choicetildep}
Suppose at the beginning of Step~3 in the proof of Theorem~\ref{2nd order} just above,
we kept the parameter $\m\in\CC$ general though made the choice $\p=-P^\dag$ instead.  
For this choice we observe $V=(\id-PP^\dag)^{-1}=V^\dag$ and so $\tilde{G}=-P^\dag V=-(VP)^\dag=-G^\dag$.
If we now substitute this form for $\tilde{G}$ into the final relation in Step~2 of
the proof of Theorem~\ref{2nd order} we obtain the following matrix kernel equation,
\begin{equation*}
  \pa_t g(y,z;x,t)=\m\pa_x^2g(y,z;x,t)-2\m g(y,0;x,t)g^\dag(0,0;x,t)g(0,z;x,t).
\end{equation*}
The choice $\m=-\mathrm{i}$ generates the corresponding matrix
kernel NLS equation to that in Theorem~\ref{2nd order} but with a different
sign for the nonlinear term. 
\end{remark}
Making further different consistent choices for $\p$ generates
matrix versions of the reverse space-time nonlocal NLS equation
and the reverse time nonlocal NLS equation given in Ablowitz and Musslimani~\cite[Eq.'s~(5),~(6)]{AM}.
Indeed we actually generate the matrix kernel versions of these equations. 
\begin{corollary}[Reverse space-time matrix nonlocal NLS equation]
If we choose $\p(x,t)=P^{\mathrm{T}}(-x,-t)$, where $P^{\mathrm{T}}$ is the operator
whose matrix kernel is the transpose of the matrix kernel corresponding to $P$,
and $\m=-\mathrm{i}$, then the integral kernel $g=g(y,z;x,t)$ corresponding to $G$,
for every $t\in[0,T]$ satisfies the reverse space-time matrix nonlocal kernel NLS equation:
\begin{equation*}
  \mathrm{i}\pa_t g(y,z;x,t)=\pa_x^2g(y,z;x,t)+2g(y,0;x,t)g^{\mathrm{T}}(0,0;-x,-t)g(0,z;x,t).
\end{equation*}
Setting $y=z=0$ generates the reverse space-time matrix nonlocal NLS equation.
Similarly if we choose $\p(x,t)=P^{\mathrm{T}}(x,-t)$ and $\m=-\mathrm{i}$, then
the integral kernel $g=g(y,z;x,t)$ corresponding to $G$ satisfies the
corresponding reverse time matrix nonlocal kernel NLS equation. And
setting $y=z=0$ generates the reverse time matrix nonlocal NLS equation.
\end{corollary}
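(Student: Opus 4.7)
The proof is a direct adaptation of Theorem~\ref{2nd order}. My plan is to reuse Steps~1 and~2 of that proof verbatim: they only require that $P$ and $\tilde{P}$ are Hankel, that they satisfy second-order linear PDEs with compatibility $\tilde{\mu}_1=-\mu_1$, and the kernel product rule. None of these ingredients depends on the specific form $\tilde{P}=P^\dagger$. Re-running the two steps with the current choice of $\tilde{P}$ therefore produces the identical intermediate identity
\begin{equation*}
  \partial_t[G](y,z;x,t) - \mu_1\,\partial_x^2[G](y,z;x,t) = 2\mu_1\,[G](y,0;x,t)\,[\tilde{G}](0,0;x,t)\,[G](0,z;x,t),
\end{equation*}
and the corollary reduces to two localised Step~3 tasks: verifying compatibility of $\tilde{P}(x,t)=P^{\mathrm{T}}(-x,-t)$ with its linear equation, and identifying $[\tilde{G}](0,0;x,t)$ in terms of $g$.

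Compatibility is immediate. Interpreting $\tilde{P}$ as the Hankel operator with scalar kernel $\tilde{p}(s;t):=p^{\mathrm{T}}(-s;-t)$, so that $\tilde{P}(x,t)$ has kernel $\tilde{p}(y+z+x;t)$, differentiating in $t$ produces a single sign flip while differentiating twice in $x$ produces two sign flips that cancel. The scalar equation $\partial_t p=\mu_1\partial_s^2 p$ thus transforms into $\partial_t\tilde{p}=-\mu_1\partial_s^2\tilde{p}$, giving $\tilde{\mu}_1=-\mu_1$ as required. With $\mu_1=-\mathrm{i}$ this yields $\tilde{\mu}_1=\mathrm{i}$, exactly as in Theorem~\ref{2nd order}.

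The heart of the proof is the identification $[\tilde{G}](0,0;x,t)=g^{\mathrm{T}}(0,0;-x,-t)$. My strategy is to use uniqueness of the Fredholm solution from Lemma~\ref{existuniqpde}(iv). I will write the Fredholm equation $\tilde{p}=\tilde{g}+\tilde{g}\ast\tilde{q}$ defining $\tilde{g}$, and separately take the Fredholm equation $p=g+g\ast q$ for $g$ evaluated at parameters $(-x,-t)$, then transpose it pointwise as a matrix identity, using $(gq)^{\mathrm{T}}=q^{\mathrm{T}}g^{\mathrm{T}}$ to reverse the order of the integrand. After a change of integration variable and use of the additive Hankel form of $p$ and $\tilde{p}$ to align the integration range with $(-\infty,0]$, the two Fredholm equations coincide at $y=z=0$, and uniqueness gives the desired identity. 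I expect this symmetry bookkeeping---tracking how matrix transpose, the sign flip of $(x,t)$, reversal of the integration variable, and the Hankel additive structure interact---to be the main technical obstacle, since operator composition and matrix kernel transpose do not commute outside the Hankel setting.

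Substituting the identification into the intermediate identity and dividing through by $\mu_1=-\mathrm{i}$ yields the reverse space-time matrix nonlocal kernel NLS equation of the corollary, and setting $y=z=0$ collapses it to the local version. The reverse time case $\tilde{P}(x,t)=P^{\mathrm{T}}(x,-t)$ follows from the same plan with the obvious modifications: only $t$ is reversed, compatibility still yields $\tilde{\mu}_1=-\mu_1$, and the analogous Fredholm-uniqueness argument produces $[\tilde{G}](0,0;x,t)=g^{\mathrm{T}}(0,0;x,-t)$, delivering the reverse time kernel NLS equation.
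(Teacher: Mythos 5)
Your skeleton---reuse Steps~1 and~2 of Theorem~\ref{2nd order}, check $\tilde{\mu}_1=-\mu_1$, then identify $[\tilde G](0,0;x,t)$---matches the structure of the paper's argument, and the compatibility check is fine. The genuine gap is the central identification $[\tilde G](0,0;x,t)=g^{\mathrm{T}}(0,0;-x,-t)$, which you only sketch and which, with your stated reading of $\p$, is not true. You define $\p(x,t)$ as the Hankel operator built from the kernel function $\tilde p(s;t)\coloneqq p^{\mathrm{T}}(-s;-t)$, so its kernel is $p^{\mathrm{T}}(-(y+z+x);-t)$. Transposing the Fredholm equation of Lemma~\ref{existuniqpde}(iv) at parameters $(-x,-t)$ produces the forcing term $p^{\mathrm{T}}(y+z-x;-t)$, which is the kernel of the operator $P^{\mathrm{T}}(-x,-t)$, not of your $\p(x,t)$: the two agree only where $y+z=0$, i.e.\ only at the single point $y=z=0$, and the change of variables $\xi\mapsto-\xi$ that you hope will ``align the integration range with $(-\infty,0]$'' instead maps $(-\infty,0]$ onto $[0,\infty)$, so no such realignment exists. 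Agreement of the two Fredholm equations at one point does not permit an appeal to uniqueness, which concerns the equation as a whole; and expanding $\tilde G=\p(\id+P\p)^{-1}$ and $G=P(\id+\p P)^{-1}$ in Neumann series shows already at second order (compare $[\p P\p](0,0;x,t)$ with $[(P\p P)^{\mathrm{T}}](0,0;-x,-t)$: the integrands match only after the reflection that moves the domain off the half-line) that with your choice of $\p$ the two sides genuinely differ.

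The paper closes this step with operator algebra rather than kernel bookkeeping: it reads $\p(x,t)=P^{\mathrm{T}}(-x,-t)$ as an operator identity, writes $\tilde G(x,t)=P^{\mathrm{T}}(-x,-t)\bigl(\id+P(x,t)P^{\mathrm{T}}(-x,-t)\bigr)^{-1}$ and $G(-x,-t)=P(-x,-t)\bigl(\id+P^{\mathrm{T}}(x,t)P(-x,-t)\bigr)^{-1}$, and obtains $\tilde G(x,t)=G^{\mathrm{T}}(-x,-t)$ at once by transposing the latter and using the push-through identity $A(\id+BA)^{-1}=(\id+AB)^{-1}A$; substituting this into the final relation of Step~2 of Theorem~\ref{2nd order} gives the corollary, and the reverse time case is identical with $(-x,-t)$ replaced by $(x,-t)$. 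If you wish to keep a Fredholm-uniqueness formulation, the statement to prove is that $G^{\mathrm{T}}(-x,-t)$ solves $\p(x,t)=(\id+Q(x,t))X$: transpose $P=G(\id+Q)$ at $(-x,-t)$ and note $Q^{\mathrm{T}}(-x,-t)=P^{\mathrm{T}}(-x,-t)\,\p^{\mathrm{T}}(-x,-t)=\p(x,t)P(x,t)=Q(x,t)$, then conclude via Corollary~\ref{cor:Vid} that $X=U\p=\p V=\tilde G$. But this again relies on the operator reading of $\p=P^{\mathrm{T}}(-x,-t)$; your kernel-level reflection $\tilde p(s;t)=p^{\mathrm{T}}(-s;-t)$ is a different operator $\p$, and for it the stated nonlocal kernel NLS equation does not follow.
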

\begin{proof}
Recall with $\m=-\mathrm{i}$ the operator $P=P(x,t)$ satisfies the linear
PDE $\pa_tP=-\mathrm{i}\pa_x^2P$ while $\p=\p(x,t)$ satisfies $\pa_t\p=-\mathrm{i}\pa_x^2\p$.
Note the choice $\p(x,t)=P^{\mathrm{T}}(-x,-t)$ is consistent with these two 
linear PDEs. Recall $G=PU$ while $\tilde G=\p V$ where $U=(\id-\p P)^{-1}$
and $V=(\id-P\p)^{-1}$. We observe, substituting for $\p(x,t)=P^{\mathrm{T}}(-x,-t)$, we have
\begin{equation*}
\tilde G(x,t)=P^{\mathrm{T}}(-x,-t)\bigl(\id+P(x,t)P^{\mathrm{T}}(-x,-t)\bigr)^{-1},
\end{equation*}
while,
\begin{equation*}
G(-x,-t)=P(-x,-t)\bigl(\id+P^{\mathrm{T}}(x,t)P(-x,-t)\bigr)^{-1}.
\end{equation*}
It is evident $\tilde G(x,t)=G^{\mathrm{T}}(-x,-t)$ which gives the result
if we substitute this form for $\tilde G$ into the final relation in Step~1 in
the proof of Theorem~\ref{2nd order}.
The reverse time matrix nonlocal kernel NLS equation follows immediately.
\end{proof}
\begin{remark}[Coupled diffusion/anti-diffusion system]
Suppose in Theorem~\ref{2nd order} and in particular in the final relation
in Step~2 of the proof of Theorem~\ref{2nd order}, we instead set
$\p=P^{\mathrm{T}}(x,-t)$ and $\m=1$, so $\tm=-1$, which is a consistent choice.
Then a straightforward computation reveals
$\tilde{G}(x,t)=\p(x,t) V(x,t)=U^{\mathrm{T}}(x,-t)P^{\mathrm{T}}(x,-t)=G^{\mathrm{T}}(x,-t)$.
So we conclude the kernels $g$ and $\tilde{g}$ satisfy the matrix kernel system,
\begin{align*}
\pa_t g(y,z;x,t)&=\pa_x^2g(y,z;x,t)+2g(y,0;x,t)\tilde{g}(0,0;x,t)g(0,z;x,t),\\
\pa_t\tilde{g}(y,z;x,t)&=-\pa_x^2\tilde{g}(y,z;x,t)-2\tilde{g}(y,0;x,t)g(0,0;x,t)\tilde{g}(0,z;x,t).
\end{align*}
Further, the functions $\langle G\rangle=\langle G\rangle(x,t)$
and $\langle\tilde G\rangle=\langle\tilde G\rangle(x,t)$, where
$\langle G\rangle(x,t)=g(0,0;x,t)$ and $\langle\tilde{G}\rangle(x,t)=\tilde{g}(0,0;x,t)$,
satisfy the coupled matrix-valued diffusion/anti-diffusion system with cubic nonlinearity, 
\begin{align*}
\pa_t\langle G\rangle&=\pa_x^2\langle G\rangle+2\langle G\rangle\langle\tilde{G}\rangle\langle G\rangle,\\
\pa_t\langle\tilde{G}\rangle&=-\pa_x^2\langle\tilde{G}\rangle
-2\langle\tilde{G}\rangle\langle G\rangle\langle\tilde{G}\rangle.
\end{align*}
Retrospectively examining the construction of these solutions, 
we require $p$ and $\p$ to satisfy the linear equations
$\pa_t p=\pa_x^2 p$ and $\pa_t\tilde{p}=-\pa_x^2\tilde{p}$. In particular they do not
satisfy the assumptions required for Lemma~\ref{disp}. The solution $p$ satisfies
the heat equation which is well-posed, while the solution $\tilde{p}$ satisfies
the backward heat equation. The latter has a solution via Fourier transform,
but loses exponentially weighted Fourier regularity with time.
\end{remark}

\subsection{Korteweg de Vries and modified equation}\label{mKdV}
We now assume $\m=\tm=0$.
We formulate and derive the corresponding result for the KdV and mKdV equations.
\begin{theorem}[\textbf{Third-order decomposition}]\label{3rd order}
Assume the Hilbert--Schmidt operators $P$, $\p$, $Q$ and $G$ satisfy the application linear system
in Definition~\ref{appliedlinear}
and their corresponding kernels satisfy the assumptions of Lemma~\ref{existuniqpde}.
Then, for some $T>0$, the integral kernel $g=g(y,z;x,t)$ corresponding to $G$, for every $t\in[0, T]$ satisfies:

\noindent (i) When $\p=-P^{\mathrm{T}}$ and $\n=-1$, the matrix kernel mKdV equation:
\begin{align*}
\partial_t g(y,z;x,t) + \partial_x^3g(y,z;x,t)=&\;
3g(y,0;x,t)g^{\mathrm{T}}(0,0;x,t)\partial_x g(0,z;x,t)\\
&\;+3(\partial_x g(y,0;x,t))g^{\mathrm{T}}(0,0;x,t)g(0,z;x,t).
\end{align*} 
In particular, $\langle G\rangle(x,t)\coloneqq g(0,0;x,t)$ satisfies the matrix mKdV equation:
\begin{equation*}
\partial_t\langle G\rangle + \partial_x^3\langle G\rangle
=3\langle G\rangle\langle G\rangle^{\mathrm{T}} \partial_x\langle G\rangle
+3(\partial_x\langle G\rangle)\langle G\rangle^{\mathrm{T}}\langle G\rangle;
\end{equation*} 

\noindent (ii) When $\p=-\mathrm{id}$ and $\n=-1$,
the primitive form of the square-matrix kernel KdV equation:
\begin{equation*}
\pa_tg(y,z;x,t)+\pa_x^3g(y,z;x,t)=3\pa_x g(y,0;x,t)\pa_x g(0,z;x,t).
\end{equation*}
In particular, $\langle G\rangle(x,t)\coloneqq g(0,0;x,t)$
satisfies the primitive form of the square-matrix KdV equation:
\begin{equation*}
\partial_t\langle G\rangle+\partial_x^3\langle G\rangle=3(\partial_x\langle G\rangle)^2.
\end{equation*}
\end{theorem}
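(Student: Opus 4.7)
The plan is to mirror the proof of Theorem~\ref{2nd order} at third order. From $G=PU$, I expand $\partial_tG-\mu_2\partial_x^3G$ using the Leibniz rule, the identity $\partial_tU=-U(\partial_tQ)U$ from Lemma~\ref{Uid}(i), and the expansion of $U_{xxx}$ in Lemma~\ref{Uid}(v). The choice $\tilde\mu_2=\mu_2$ is imposed so that $Q_t-\mu_2 Q_{xxx}$ collapses to $-3\mu_2\,\partial_x(\tilde P_xP_x)$, and the $PUQ_{xxx}U$ contributions coming from $-PU(\partial_tQ)U$ and from $-\mu_2PU_{xxx}$ cancel exactly. Repeated use of Lemma~\ref{Uid}(iv) for $U_{xx}$ and of (vi) for $UF_xU_x=U_xF_xU$ then reassembles the surviving terms into a finite sum of blocks of the form $F\,\partial_x(HH')\,F'$, in which $H,H'$ are drawn from the Hankel operators $P$, $P_x$, $\tilde P$, $\tilde P_x$ and $F,F'$ are Hilbert--Schmidt expressions built from $U,U_x,P,P_x$.

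Next I take the kernel bracket and apply the product rule summand-by-summand, splitting each block as $[FH](y,0;x)\cdot[H'F'](0,z;x)$. Grouping produces two symmetric combinations whose inner operators both equal $V_x=-V(P\tilde P)_xV$ (using $\mathrm{id}-V=PU\tilde P$ from Corollary~\ref{cor:Vid} together with Lemma~\ref{Uid}(i),(ii)), flanked by an outer $G$- or $G_x$-kernel. A second application of the product rule to $V(P\tilde P)_xV$, with $F=F'=V$, $H=P$, $H'=\tilde P$, produces the middle kernel $[\tilde PV](0,0;x)=[\tilde G](0,0;x)$, yielding the three-factor expression
\begin{equation*}
[\partial_tG-\mu_2\partial_x^3G](y,z;x)=3\mu_2\Bigl([G](y,0;x)[\tilde G](0,0;x)[G_x](0,z;x)+[G_x](y,0;x)[\tilde G](0,0;x)[G](0,z;x)\Bigr).
\end{equation*}

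For case~(i), setting $\tilde P=-P^{\mathrm T}$ and $\mu_2=-1$ makes $\tilde Q=-PP^{\mathrm T}$ symmetric, so $V^{\mathrm T}=V$ and $\tilde G=\tilde PV=-P^{\mathrm T}V=-(VP)^{\mathrm T}=-G^{\mathrm T}$; substituting $[\tilde G]=-g^{\mathrm T}$ and $\mu_2=-1$ into the Step~2 formula recovers the matrix kernel mKdV equation, and evaluation at $y=z=0$ gives the local matrix mKdV. For case~(ii), setting $\tilde P=-\mathrm{id}$ and $\mu_2=-1$ makes $\tilde P_x=\tilde P_{xx}=0$, which kills the $Q_t-\mu_2Q_{xxx}$ contribution and every summand carrying a $\tilde P_x$-factor; moreover $\tilde Q=-P=Q$ gives $V=U$, and the algebraic identity $G=PU=U-\mathrm{id}$ (from Lemma~\ref{Uid}(ii) with $F=-P$) gives $G_x=U_x$. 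Since no product rule invocation now uses $\tilde P$ as a Hankel factor, the three-factor structure degenerates and a single application of the product rule assembles the surviving terms into the two-factor primitive KdV nonlinearity $3\,[G_x](y,0;x)[G_x](0,z;x)$, from which the local primitive matrix KdV at $y=z=0$ follows.

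The main obstacle is the bookkeeping in Step~1: the cubic Leibniz expansion of $\partial_x^3(PU)$, the four-term formula for $U_{xxx}$, and the cubic Leibniz expansion of $Q_{xxx}$ together generate many intermediate summands, and only systematic use of the $U$-identities regroups them into product-rule-ready blocks of type $F\,\partial_x(HH')\,F'$. A secondary delicate point concerns case~(ii): $\tilde P=-\mathrm{id}$ is neither Hilbert--Schmidt nor Hankel in the usual sense, so one must verify that no product rule step actually requires $\tilde P$ as a Hankel factor. This is automatic from $\tilde P_x=0$, and it is precisely this degeneracy that explains the absence of a middle $[\tilde G](0,0;x)$ factor in the primitive KdV result.
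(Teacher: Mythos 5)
Your treatment of part (i) follows essentially the paper's own route: expand $\partial_tG-\mu_2\partial_x^3G$ with $G=PU$ and the $U$-identities, impose $\tilde\mu_2=\mu_2$ so that $Q_t-\mu_2Q_{xxx}=-3\mu_2(\p_xP_x)_x$, apply the kernel bracket and product rule, and resolve the result into $3\mu_2\bigl([G][\tilde G][G_x]+[G_x][\tilde G][G]\bigr)$ before specialising $\p=-P^{\mathrm T}$, $\mu_2=-1$. You state the regrouping rather than perform it — in particular the intermediate cubic terms of the form $[PU\p]^3$ and $[P_xU\p][PU\p]$, which appear in both collations and must cancel against each other, are not mentioned — but the plan is the correct one and the final identity $\tilde G=-G^{\mathrm T}$ is justified correctly.

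Part (ii), however, contains a genuine gap. Your argument is that, because $\p_x=0$, ``no product rule invocation now uses $\p$ as a Hankel factor'' and the three-factor structure simply degenerates. That is not so: in the derivation you set up for the general case, the product rule is applied to $\partial_x(\p P)$ and $\partial_x(P\p)$ with $\p$ itself serving as one of the two Hankel factors — indeed this is exactly how the middle factor $[\tilde G](0,0;x)$ arises, via $[V(P\p)_xV]=[VP][\p V]$. Those terms do \emph{not} vanish when $\p_x=0$; with $\p=-\mathrm{id}$ they become terms in $P_x$, such as $Q_x U=-P_xU$, and the splittings that produced the three-factor form are no longer legitimate because $-\mathrm{id}$ is neither Hankel nor Hilbert--Schmidt. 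Consequently one cannot reuse the mKdV bookkeeping at all: one must return to the operator relation of Step~1, set $Q=\tilde Q=-P$, and regroup afresh using the identities special to this case, namely $PU=UP$, $\mathrm{id}-U=-PU$, $U_x=UP_xU$ and $U_x=P_xU+PU_x$, so as to rewrite the right-hand side as
\begin{equation*}
U_x(PP_x)_xU+U(P_xP_x)_xU+U(P_xP)_xU_x+U_x(PP)_xU_x,
\end{equation*}
in which every product-rule block has only the Hankel operators $P$ and $P_x$ as inner factors. Only then does the kernel bracket give $\bigl([U_xP]+[UP_x]\bigr)\bigl([P_xU]+[PU_x]\bigr)=[U_x][U_x]=[G_x][G_x]$, i.e.\ the primitive KdV nonlinearity (and note this takes one product-rule application per block plus the factorisation, not a single application). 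This separate regrouping — the content of the paper's Step~6 — is the missing idea in your proposal, and the justification you offer in its place (``automatic from $\p_x=0$'') would fail as stated.
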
 
\begin{proof}
Recall $G=PU=VP$. We set $\n$ and $\tn$ equal to each other momentarily.
We split the proof into the following steps.\smallskip

\emph{Step 1: Apply the linear dispersion operator to $G=PU$.}
With $G=PU$, using the Leibniz rule, that $P_t=\n P_{xxx}$
and the identities for $\pa U$, $U_{xx}$
and $U_{xxx}$ from Lemma~\ref{Uid} in (i), (iv) and (v), we compute
\begin{align*}
\partial_t G - \n&\partial_x^3 G\\
=&\; P_tU -PUQ_tU- \n(P_{xxx}U + 3P_{xx}U_x + 3P_xU_{xx} + PU_{xxx})\\
=&\; -PU(Q_t-\n Q_{xxx})U + \n\bigl(3P_{xx}UQ_xU+ 6P_xU_xQ_xU\\
&\;+3P_xUQ_{xx}U + 6PU_xQ_xU_x + 3PUQ_{xx}U_x + 3PU_xQ_{xx}U\bigr).
\end{align*}
We now set $\tn=\n$ and assume this holds hereafter.
Since $Q\coloneqq\p P$, by direct computation we have
\begin{align*}
Q_t-\n Q_{xxx}=&\;\tn\p_{xxx}P + \n\p P_{xxx}\\
&\;-\n\bigl(\p_{xxx}P+3\p_{xx}P_x+3\p_xP_{xx}+\p P_{xxx}\bigr)\\
=&\;- 3\n(\p_{x}P_x)_x.
\end{align*}
Substituting this result into the previous one and dividing by $3\n$, we find
\begin{align*}
\tfrac13({\n}^{-1}\partial_t G - \partial_x^3 G)
=&\;PU(\p_{x}P_x)_xU+P_{xx}UQ_xU+2P_xU_xQ_xU\\
&\;+P_xUQ_{xx}U + 2PU_xQ_xU_x + PUQ_{xx}U_x \\
&\;+ PU_xQ_{xx}U.
\end{align*}\smallskip

\emph{Step 2: Apply the kernel bracket operator.}
We note $Q_x=(\p P)_x$ and $Q_{xx}=(\p_x P)_x+(\p P_x)_x$,
From the final relation in Step~1 we isolate the term
$PUQ_{xx}U_x=PU(\p_x P)_xU_x+PU(\p P_x)_xU_x$. Focusing
on the second term on the right, using $U_x=-UQ_xU$,
applying the kernel bracket and using the product rule, we see,
\begin{equation*}
[PU(\p P_x)_xU_x]=-[PU(\p P_x)_xU(\p P)_xU]=-[PU(\p P_x)_xU\p][PU].
\end{equation*}
The remaining terms in the final relation in Step~1, including
the first term on the right just above, are straightforward.
Using the formulae for $Q_x$ and $Q_{xx}$ and applying the 
kernel bracket with its product rule, we find, 
\begin{align*}
\tfrac13({\n}^{-1}\partial_t [G] - \partial_x^3 [G])
=&\;[PU\p_{x}][P_xU]+[P_{xx}U\p][PU]+2[P_xU_x\p][PU]\\
&\;+[P_xU\p_x][PU]+[P_xU\p][P_xU] + 2[PU_x\p][PU_x]\\
&\;+[PU\p][PU_x]-[PU(\p P_x)_xU\p][PU] \\
&\;+[PU_x\p_x][PU]+[PU_x\p][P_xU].
\end{align*}\smallskip

\emph{Step 3: Collate terms with postfactors $[PU_x]$ and $[P_xU]$.}
We recall from Lemma~\ref{Uid}(ii) with $F=\tilde Q=P\p$ and $V$ in place of $U$,
that $\id-V=\tilde QV=P\p V=PU\p$ since $U\p=\p V$ from Corollary~\ref{cor:Vid}.
Hence $V_x=-(PU\p)_x$. Note also from Lemma~\ref{Uid}(i) we have $V_x=-V\tilde Q_xV=-V(P\p)_xV$.
The terms with postfactors $[PU_x]$ and $[P_xU]$ in the final
relation in Step~2, using these identities and $U_x=-UQ_xU=-U(\p P)_xU$, are
\begin{align*}
\bigl([PU&\p_{x}]+[P_xU\p]+[PU_x\p]\bigr)[P_xU]
+2[PU_x\p][PU_x]+[PU\p][PU_x]\\
=&\;[(PU\p)_x][P_xU]+[(PU\p)_x][PU_x]+[PU_x\p][PU_x]-[P_xU\p][PU_x]\\
=&\;-[V_x][G_x]+[PUQ_xU\p][PUQ_xU]+[P_xU\p][PUQ_xU]\\
=&\;[VP][\p V][G_x]+[PU\p]^3[G]+[P_xU\p][PU\p][G].
\end{align*}
\smallskip

\emph{Step 4: Collate the prefactors of $[PU]$.}
Consider all the terms in the final relation in Step~2 which premultiply
the postfactors $[PU]$. All these terms, without the kernel bracket for
the moment, are:
\begin{align*}
P_{xx}U\p+&2P_xU_x\p+P_xU\p_x-PU(\p P_x)_xU\p+PU_x\p_x\\
(a) =&\;P_{xx}U\p-P_xU(\p P)_xU\p+P_xU\p_x-PU\p_x P_xU\p\\
&\;-PU\p P_{xx}U\p+P_xU_x\p+PU_x\p_x\\
(b) =&\;P_{xx}U\p-P_xU\p_x PU\p-P_xU\p P_xU\p+P_xU\p_x\\
&\;-PU\p_x P_xU\p -PU\p P_{xx}U\p+P_xU_x\p+PU_x\p_x\\
(c) =&\;VP_{xx}U\p-P_xU\p P_xU\p+P_xU\p_xV-PU\p_x P_xU\p\\
&\;+P_xU_x\p+PU_x\p_x\\
(d) =&\;VP_{xx}\p V-PU_x\p_xV+V_xP\p_xV+VP_x\p_xV\\
&\;-(PU\p)_xP_xU\p+(PU_x\p)P_xU\p+P_xU_x\p+PU_x\p_x\\
(e) =&\;VP_{xx}\p V+V_xP\p_xV+VP_x\p_xV+V_xP_x\p V\\
&\;+(PU_x\p)P_xU\p+P_xU_x\p+PU_x\p_x(\id-V)\\
(f) =&\;V(P_{x}\p)_xV+V_x(P\p)_xV+PU_x\p P_xU\p\\
&\;+P_xU_x\p+PU_x\p_xPU\p\\
(g) =&\;V(P_{x}\p)_xV+V_x(P\p)_xV+PU_x(\p P)_xU\p\\
&\;-P_xU(\p P)_xU\p.
\end{align*}
In the computation above, in:
(a) We split the term $2P_xU_x\p$ and used $U_x=-UQ_xU=-U(\p P)_xU$;
(b) We split the term $P_xU(\p P)_xU\p$ using the product rule;
(c) We recalled from Step~3, $\id-V=PU\p$, and made this replacement
in each of the two possible places, leading to cancellation of some terms;
(d) We used $U\p=\p V$, since $PU=VP$ substituted $P_xU=-PU_x+V_xP+VP_x$ in the term $P_xU\p_xV$
and then replaced the two negative terms with postfactors $P_xU\p$ by $(PU\p)_xP_xU\p-(PU_x\p)P_xU\p$;
(e) We combined the two terms as shown and used $V_x=-(PU\p)_x$;
(f) We use the identity $\id-V=PU\p$ and finally in (g) We combined the
terms shown and used $U_x=-U(\p P)_xU$.
If now re-introduce the kernel bracket to the terms in the last
computation and use the kernel bracket product rule, we see,
\begin{align*}
[P_{xx}U\p+&2P_xU_x\p+P_xU\p_x-PU(\p P_x)_xU\p+PU_x\p_x]\\
=&\;[VP_x][\p V]+[V_xP][\p V]+[PU_x\p][PU\p]-[P_xU\p][PU\p]\\
=&\;[(VP)_x][\p V]-[PU\p]^3-[P_xU\p][PU\p],
\end{align*}
where in the last step we used $U_x=-U(\p P)_xU$ and applied the
kernel bracket product rule once more. \smallskip

\emph{Step 5: Combine Steps 3 and 4.} We add the final terms on the right-hand
side in the relation in Step~3 to the terms in the final relation in Step~4,
with the latter terms postmultipled by $[PU]$ as we were only considering
the prefactors of $[PU]$ in Step~4. Using $G=PU=VP$ and recall we also set
$\tilde G=\p V=U\p$, this gives,
\begin{align*}
&[VP][\p V][G_x]+[PU\p]^3[G]+[P_xU\p][PU\p][G]\\
&+[(VP)_x][\p V][PU]-[PU\p]^3[PU]-[P_xU\p][PU\p][PU]\\
&\qquad\qquad\qquad\qquad\qquad\qquad\qquad\qquad\qquad=[G][\tilde G][G_x]+[G_x][\tilde G][G].
\end{align*}
Hence we have shown 
\begin{equation*}
\tfrac13({\n}^{-1}\pa_t[G]-\pa_x^3[G])
=[G][\tilde G][G_x]+[G_x][\tilde G][G].
\end{equation*}
We now choose $\p=-P^{\mathrm{T}}$. By this we mean $\p$ corresponds to the operator whose
kernel is minus the matrix transpose of the kernel corresponding to $P$. We observe
$\tilde G=-G^{\mathrm{T}}$ since $U^{\mathrm{T}}=U$. Hence if $\n=-1$, so $\tn=-1$,
then the equation above corresponds to the matrix kernel mKdV equation stated
in part (i) of the theorem.
In particular, $\langle G\rangle=\langle G\rangle(x,t)$ satisfies the local matrix mKdV equation
also stated in part (i) of the theorem.\smallskip

\emph{Step 6: The KdV derivation.}
For the KdV equation we set $\p=-\mathrm{id}$ and $\n=-1$.
Since $\p$ is now not Hankel, we return to the final relation in Step~1.
Since $\p=-\mathrm{id}$ so $Q=-P$, the final relation in Step~1 becomes,
\begin{align*}
-\tfrac13({\n}^{-1}\partial_t G - \partial_x^3 G)
=&\;P_{xx}UP_xU+2P_xU_xP_xU+P_xUP_{xx}U\\
&\;+2PU_xP_xU_x+PUP_{xx}U_x+PU_xP_{xx}U.
\end{align*}
Since $\p=-\mathrm{id}$ we have $U=V=(\id-P)^{-1}$ so
$G=PU=UP$. Hence the third and sixth terms
on the right above combine as follows:
\begin{align*}
P_xUP_{xx}U+PU_xP_{xx}U&=(PU)_xP_{xx}U\\
&=(UP)_xP_{xx}U\\
&=U_xPP_{xx}U+UP_xP_{xx}U.
\end{align*}
Since $U=(\id-P)^{-1}$, we observe $U_x=UP_xU$. Further we observe 
$\id-U=-PU=-UP$ and thus $U_x=(PU)_x=(UP)_x$. With these in mind,
we see the first and fifth terms on the right above combine as follows:
\begin{align*}
P_{xx}UP_xU+PUP_{xx}U_x&=P_{xx}U_x-(\id-U)P_{xx}U_x\\
&=UP_{xx}U_x\\
&=UP_{xx}P_xU+UP_{xx}PU_x.
\end{align*}
Finally the second and fourth terms on the right above combine as follows:
\begin{align*}
2P_xU_x&P_xU+2PU_xP_xU_x\\
(a)&=2P_xU_xU_x-2P_xU_xPU_x+2U_xP_xU_x-2P_xUP_xU_x\\
(b)&=2P_xU_xU_x-2P_xU_xU_x+2U_xP_xU_x\\
(c)&=U_xP_xU_x+U_xP_xU_x\\
(d)&=U_xPP_xU_x+UP_xP_xU_x+U_xP_xPU_x+U_xP_xP_xU.
\end{align*}
In the computation above, in: (a) We used $U_x=P_xU+PU_x$ twice;
(b) We combined the second and fourth terms using the same identity again in the middle of the terms;
(c) We cancelled like terms and split the remaining term and finally
(d) We used the same identity again on the first and then final factors, respectively, of
the first and second terms.
We now combine the first through sixth terms together, so the
first relation in this step becomes 
\begin{align*}
-\tfrac13({\n}^{-1}\partial_t G - \partial_x^3 G)
=&\;U_x(PP_x)_xU+U(P_xP_x)_xU\\
&\;+U(P_xP)_xU_x+U_x(PP)_xU_x.
\end{align*}
If we now apply the kernel bracket and its product rule, and note since $\id-U=-PU=-UP$ we have
$U_x=G_x$ and $U_x=(PU)_x=(UP)_x=P_xU+PU_x=U_xP+UP_x$, then we find
\begin{align*}
-\tfrac13({\n}^{-1}\partial_t [G] - \partial_x^3 [G])
=&\;[U_xP][P_xU]+[UP_x][P_xU]\\
&\;+[UP_x][PU_x]+[U_xP][PU_x]\\
=&\;[U_x][U_x]\\
=&\;[G_x][G_x].
\end{align*}
This corresponds to the primitive form of the square-matrix kernel KdV equation stated in part (ii)
of the theorem since $\n=-1$. In particular, $\langle G\rangle=\langle G\rangle(x,t)$ satisfies
the primitive form of the local square-matrix KdV equation stated in part (ii) of the theorem.
\end{proof} 
\begin{remark}[Complex mKdV]\label{rmk:complexmKdV}
In the mKdV derivation above the operators $P$, $\tilde P$ and thus $G$ and $\tilde G$
could be complex-valued. Setting $\tilde P=-P^\dag$ generates complex versions of the kernel mKdV
and mKdV equations in Theorem~\ref{3rd order}. Note when $\tilde P=-P^\dag$ then
$V=V^\dag$ and $\tilde{G}=-G^\dag$. See Remark~\ref{rmk:choicetildep}.
\end{remark}
\begin{remark}[KdV derivation]
The derivation of the KdV equation given in Step~6 of the proof of Theorem~\ref{3rd order}
above relied on the final relation in Step~1. The derivation can be shortened by utilising
earlier on in Step~1, since in this case $\id-U=-PU=-UP$ and therefore we know $U_x=(PU)_x$
and also $U_x=UP_xU$, we can compute $\pa_x^3G=\pa_x^3(PU)=\pa_x^2(U_x)=\pa_x^2(UP_xU)$
and so forth. This is the approach used in Doikou \textit{et al.\/} \cite{paper3}.
\end{remark}
\begin{remark}[Kernel bracket application]
We applied the kernel bracket in Step~2 of the proof. This keeps the computation relatively succinct.
However the computation can be developed much further before necessarily applying the kernel bracket.
This means imposing the specialisation $\p=-\id$ can be delayed until a penultimate step. 
See Styliandis~\cite{Stylianidis} for more details. 
\end{remark}
\begin{remark}[Linearisation]\label{rmk:linearisation}
We emphasise Theorems~\ref{2nd order} and \ref{3rd order} establish that the
application linear system given in Definition~\ref{appliedlinear}, for
the choices of the parameters outlined, represents a linearisation of the
time-evolutionary integrable nonlinear PDEs considered. The solutions
to the linear partial differential equations for $P$ and $\p$, or in particular their kernels,
represents the first linear system we need to solve, which is achievable analytically.
We then compute the operators $Q$ and $\tilde Q$ directly from $P$ and $\p$ via their definitions
involving the respective products of these operators. Then the second system of linear equations
we need to solve are the linear Fredholm equations for $G$ and $\tilde G$.
\end{remark}
\begin{remark}[Solving initial value problems]\label{rmk:IVPs}
Concerning the initial value problem for each of the equations
of Theorems \ref{2nd order} and \ref{3rd order}, in light of the Remark~\ref{rmk:linearisation},
we work as follows.
Given arbitrary smooth initial data $p_0=p_0(x)$ and $\tilde{p}_0=\tilde{p}_0(x)$,
we solve the linear equations for $p$ and $\tilde{p}$ in Definition~\ref{appliedlinear}
analytically via Fourier transform or convolutional integrals. 
We then find $q$ by evaluating the integral in Lemma \ref{existuniqpde}(ii).
Finally, we generate the solution $[G]=g(y,z;x,t)$ to the kernel PDE
by solving the linear integral equation of Lemma~\ref{existuniqpde}(iv).
Setting $y=z=0$ yields the solution $\langle G\rangle=g(0,0;x,t)$ to the corresponding local PDE.
For numerical simulations using this approach, see Doikou \textit{et al.} \cite{paper3}.
It would be desirable, of course, to be able to carry out the above process
starting from arbitrary initial data $g_0=g_0(x)$ instead.
This is achievable in principle by employing classical `scattering' methods; 
see McKean~\cite[p.~238]{mckean} for a pertinent remark in this direction,
as well as Discussion Section~\ref{sec:discussion}.
\end{remark}
As for the NLS equation case, different consistent choices
for $\p$ generate matrix versions of the reverse space-time nonlocal mKdV equation,
the real or complex versions, given in Ablowitz and Musslimani~\cite[Eq.'s~(10),~(9)]{AM}.
Naturally we actually generate the matrix kernel versions of these equations. 
\begin{corollary}[Reverse space-time matrix nonlocal mKdV equation]
If we choose $\p(x,t)=-P^{\mathrm{T}}(-x,-t)$ and $\n=-1$,
then the kernel function $g=g(y,z;x,t)$ corresponding to $G$,
for every $t\in[0,T]$ satisfies the reverse space-time real matrix nonlocal kernel mKdV equation:
\begin{align*}
\partial_t g(y,z;x,t) + \partial_x^3g(y,z;x,t)=&\;
3g(y,0;x,t)g^{\mathrm{T}}(0,0;-x,-t)\partial_x g(0,z;x,t)\\
&\;+3(\partial_x g(y,0;x,t))g^{\mathrm{T}}(0,0;-x,-t)g(0,z;x,t).
\end{align*} 
Setting $y=z=0$ generates the reverse space-time real matrix nonlocal mKdV equation.
Similarly if we choose $\p(x,t)=-P^\dag(-x,-t)$ and $\n=-1$, then
the integral kernel $g=g(y,z;x,t)$ corresponding to $G$ satisfies the
corresponding reverse space-time complex matrix nonlocal kernel mKdV equation. And
setting $y=z=0$ generates the reverse space-time complex matrix nonlocal mKdV equation.
\end{corollary}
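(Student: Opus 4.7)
The plan is to mirror exactly the strategy used in the reverse space--time matrix nonlocal NLS corollary, but starting from the kernel identity derived in Step~5 of the proof of Theorem~\ref{3rd order}, namely
\begin{equation*}
\tfrac13(\n^{-1}\pa_t[G]-\pa_x^3[G])=[G][\tilde G][G_x]+[G_x][\tilde G][G].
\end{equation*}
This identity was derived without committing to a specific choice of $\p$, so the whole content of the corollary is to verify that the ansatz $\p(x,t)=-P^{\mathrm{T}}(-x,-t)$ (respectively $\p(x,t)=-P^\dag(-x,-t)$) is \emph{admissible} for the application linear system and then to compute $\tilde G$ in closed form in terms of $G$.

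First, I would verify consistency with the linear partial differential equation. With $\n=\tn=-1$ the operator $P$ satisfies $\pa_tP=-\pa_x^3P$; differentiating $\p(x,t)=-P^{\mathrm{T}}(-x,-t)$ directly gives $\pa_t\p(x,t)=(\pa_tP)^{\mathrm{T}}(-x,-t)$ and $\pa_x^3\p(x,t)=(\pa_x^3P)^{\mathrm{T}}(-x,-t)$, so $\pa_t\p=-\pa_x^3\p$, matching Definition~\ref{appliedlinear} with $\tn=-1$. The same check works verbatim with $P^{\mathrm{T}}$ replaced by $P^\dag$ since the coefficients are real.

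Second, I would compute $\tilde G$. By definition $\tilde G(x,t)=\p(x,t)V(x,t)$ with $V=(\id+P\p)^{-1}$, so substituting the ansatz gives
\begin{equation*}
\tilde G(x,t)=-P^{\mathrm{T}}(-x,-t)\bigl(\id-P(x,t)P^{\mathrm{T}}(-x,-t)\bigr)^{-1}.
\end{equation*}
On the other hand, $G(-x,-t)=P(-x,-t)\bigl(\id-P^{\mathrm{T}}(x,t)P(-x,-t)\bigr)^{-1}$, and transposing this (as an operator, which swaps the kernel's two arguments and its matrix transpose) together with the standard Woodbury-type identity $A(\id+BA)^{-1}=(\id+AB)^{-1}A$ with $A=P^{\mathrm{T}}(-x,-t)$ and $B=-P(x,t)$ produces exactly $\tilde G(x,t)=-G^{\mathrm{T}}(-x,-t)$. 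At the kernel level this reads $[\tilde G](0,0;x,t)=-g^{\mathrm{T}}(0,0;-x,-t)$, which is the only value of $[\tilde G]$ that actually enters the product-rule expansion in the identity from Step~5.

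Finally, substituting $[\tilde G](0,0;x,t)=-g^{\mathrm{T}}(0,0;-x,-t)$ into the identity from Step~5, evaluating the kernel brackets using the product rule as in Theorem~\ref{3rd order}, and multiplying through by $-3$ (because $\n^{-1}=-1$) yields the reverse space-time matrix nonlocal kernel mKdV equation as stated. Setting $y=z=0$ then gives the local version. The complex case is identical, with $P^{\mathrm{T}}$ replaced everywhere by $P^\dag$: the ansatz $\p(x,t)=-P^\dag(-x,-t)$ is still consistent with $\pa_t\p=-\pa_x^3\p$, the same operator identity delivers $\tilde G(x,t)=-G^\dag(-x,-t)$, and substitution yields the complex matrix kernel equation. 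The only subtle step is the bookkeeping in item (2), i.e.\ recognising that operator transposition swaps the kernel's spatial arguments as well as transposing the matrix, so that evaluation at $(0,0)$ collapses the swap and leaves precisely $g^{\mathrm{T}}(0,0;-x,-t)$; this is the one place a sign or argument error could propagate through the final equation.
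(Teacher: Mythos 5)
Your proposal is correct and follows essentially the same route as the paper: check that $\p(x,t)=-P^{\mathrm{T}}(-x,-t)$ (resp.\ $-P^\dag(-x,-t)$) is consistent with $\pa_t\p+\pa_x^3\p=0$, deduce $\tilde G(x,t)=-G^{\mathrm{T}}(-x,-t)$ from the explicit forms of $\tilde G=\p V$ and $G=PU$, and substitute this into the final relation of Step~5 of Theorem~\ref{3rd order} with $\n=-1$. The only difference is cosmetic: you justify $\tilde G(x,t)=-G^{\mathrm{T}}(-x,-t)$ by explicitly invoking the push-through identity $A(\id+BA)^{-1}=(\id+AB)^{-1}A$ (in effect Corollary~\ref{cor:Vid}), whereas the paper simply observes the equality after writing out both operator expressions.
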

\begin{proof}
Recall with $\n=-1$ the operator $P=P(x,t)$ satisfies the linear
PDE $\pa_tP+\pa_x^3P=0$ while $\p=\p(x,t)$ satisfies $\pa_t\p+\pa_x^3\p=0$.
Note, by a simple transformation of coordinates,
the choice $\p(x,t)=-P^{\mathrm{T}}(-x,-t)$ is consistent with these two 
linear PDEs. Recall $G=PU$ while $\tilde G=\p V$ where $U=(\id-\p P)^{-1}$
and $V=(\id-P\p)^{-1}$. We observe, substituting for $\p(x,t)=-P^{\mathrm{T}}(-x,-t)$, we have
\begin{equation*}
\tilde G(x,t)=-P^{\mathrm{T}}(-x,-t)\bigl(\id-P(x,t)P^{\mathrm{T}}(-x,-t)\bigr)^{-1},
\end{equation*}
while,
\begin{equation*}
G(-x,-t)=P(-x,-t)\bigl(\id-P^{\mathrm{T}}(x,t)P(-x,-t)\bigr)^{-1}.
\end{equation*}
As previously, we observe $\tilde G(x,t)=-G^{\mathrm{T}}(-x,-t)$.
If we substitute this form for $\tilde G$ into the final relation in Step~5 in
the proof of Theorem~\ref{3rd order}, we arrive at the required result.
The complex nonlocal version for the matrix kernel mKdV equation follows
analogously.
\end{proof}
We now examine the relation between the solutions of the matrix kernel mKdV and KdV equations:
the Miura transformation.
Suppose the operator $P$ has a square-matrix symmetric kernel and satisfies $\pa_tP+\pa_x^3P=0$.
From Theorem~\ref{3rd order} we see the solutions
to the square-matrix symmetric kernel mKdV equation, $[G^{\mathrm{mKdV}}]$,
and the square-matrix symmetric kernel primitive KdV equation, $[G^{\mathrm{KdV}}]$, are respectively given by
\begin{equation*}
G^{\mathrm{mKdV}}\coloneqq P(\id-P^2)^{-1}\quad\text{and}\quad G^{\mathrm{KdV}}\coloneqq P(\id-P)^{-1}.
\end{equation*}
The following Corollary asserts the Miura transformation is essentially a consequence
of the operator decomposition
\begin{equation*}
(\id-P^2)=(\id-P)(\id+P).
\end{equation*}
This naturally applies in the non-commutative setting. Note Theorem~\ref{3rd order} outlines the
linearisation procedure for the primitive form of the square-matrix KdV equation,
and so the solution to the square-matrix KdV equation itself is $[G_x^{\mathrm{KdV}}]$. 
\begin{corollary}[Miura transformation]
Assume the operator $P$ has a square-matrix symmetric kernel
and satisfies $\pa_tP+\pa_x^3P=0$. The solutions 
to the square-matrix symmetric kernel mKdV equation, $[G^{\mathrm{mKdV}}]$,
and the square-matrix symmetric kernel KdV equation, $[G_x^{\mathrm{KdV}}]$,
given in Theorem~\ref{3rd order} are related by the Miura transformation:
\begin{equation*}
[G_x^{\mathrm{KdV}}]=[G_x^{\mathrm{mKdV}}]+[G^{\mathrm{mKdV}}]^2.
\end{equation*}
\end{corollary}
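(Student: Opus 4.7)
The plan is to exploit the commutative operator factorisation $(\id-P^2)=(\id-P)(\id+P)$, which is legitimate even in the non-commutative setting since $P$ trivially commutes with itself. Introducing $U_1\coloneqq(\id-P)^{-1}$ and $U_2\coloneqq(\id-P^2)^{-1}$, so that $G^{\mathrm{KdV}}=PU_1$ and $G^{\mathrm{mKdV}}=PU_2$, the factorisation above yields $U_1=U_2(\id+P)=(\id+P)U_2$, where we have used that $U_2$ commutes with $P$ (both are analytic in $P$). Multiplying by $P$ on the left produces the pivotal decomposition
\begin{equation*}
G^{\mathrm{KdV}}=PU_2(\id+P)=G^{\mathrm{mKdV}}+G^{\mathrm{mKdV}}P.
\end{equation*}

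Differentiating in $x$ and taking the kernel bracket gives $[G^{\mathrm{KdV}}_x]=[G^{\mathrm{mKdV}}_x]+[\partial_x(G^{\mathrm{mKdV}}P)]$, so the Miura identity reduces to showing the last term equals $[G^{\mathrm{mKdV}}](y,0;x)\,[G^{\mathrm{mKdV}}](0,z;x)$, which is how the quadratic kernel symbol $[G^{\mathrm{mKdV}}]^2$ should be read under the notational conventions of Theorem~\ref{3rd order}. To this end, use $PU_2=U_2P$ to rewrite $G^{\mathrm{mKdV}}P=U_2P^2$, and note that $U_2P^2=U_2-\id$ by the definition of $U_2$. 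Applying Lemma~\ref{Uid}(i) to $U_2$ with $F=-P^2$ then yields
\begin{equation*}
\partial_x(G^{\mathrm{mKdV}}P)=(U_2)_x=U_2\,\partial_x(PP)\,U_2.
\end{equation*}

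At this point the kernel product rule applies directly, with $F=F'=U_2$ and the two Hankel factors $H=H'=P$:
\begin{equation*}
[U_2\,\partial_x(PP)\,U_2](y,z;x)=[U_2P](y,0;x)\,[PU_2](0,z;x)=[G^{\mathrm{mKdV}}](y,0;x)\,[G^{\mathrm{mKdV}}](0,z;x),
\end{equation*}
which closes the argument. The only point that requires genuine care is recognising that, despite the non-commutative setting, the symmetric-kernel hypothesis on $P$ plays no role here: what matters is that $P$ commutes with $U_1$ and $U_2$ as functions of itself, so the scalar factorisation $(\id-P^2)=(\id-P)(\id+P)$ lifts verbatim to an operator identity. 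Beyond that, the entire derivation collapses to a single clean application of the product rule, consistent with the paper's claim that the Miura transformation is essentially a consequence of this trivial operator decomposition; no substantive obstacle is anticipated.
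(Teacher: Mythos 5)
Your argument is correct and is essentially the paper's own proof: both rest on the factorisation $(\id-P^2)=(\id-P)(\id+P)$, the resolvent identity $\partial_x(\id-P^2)^{-1}=(\id-P^2)^{-1}(P^2)_x(\id-P^2)^{-1}$ from Lemma~\ref{Uid}(i), and a single application of the kernel product rule with Hankel factors $H=H'=P$; your choice to multiply the factorisation by $P$ first (using $G^{\mathrm{mKdV}}P=U^{\mathrm{mKdV}}-\id$) rather than differentiate $U^{\mathrm{KdV}}=(\id+P)U^{\mathrm{mKdV}}$ and invoke $G_x^{\mathrm{KdV}}=U_x^{\mathrm{KdV}}$ is only a cosmetic reordering. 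One small caveat on your closing remark: the symmetric-kernel hypothesis is not superfluous in context---it is what makes the choice $\p=-P^{\mathrm{T}}$ of Theorem~\ref{3rd order}(i) coincide with $-P$, so that $G^{\mathrm{mKdV}}=P(\id-P^2)^{-1}$ really is the mKdV solution you take as your starting point---although you are right that the algebra of the Miura identity itself never uses it.
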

\begin{proof}
For convenience we set $U^{\mathrm{mKdV}}\coloneqq (\id-P^2)^{-1}$
and $U^{\mathrm{KdV}}\coloneqq (\id-P)^{-1}$ and recall from
Theorem~\ref{3rd order} and Definition~\ref{appliedlinear}
that $G^{\mathrm{mKdV}}\coloneqq PU^{\mathrm{mKdV}}$. 
Recall further from the proof of Theorem~\ref{3rd order} that
$G_x^{\mathrm{KdV}}=U_x^{\mathrm{KdV}}$. Further note we have
$U_x^{\mathrm{mKdV}}=U^{\mathrm{mKdV}}(P^2)_xU^{\mathrm{mKdV}}$.
Also in this symmetric matrix kernel scenario we have 
$G^{\mathrm{mKdV}}=PU^{\mathrm{mKdV}}=U^{\mathrm{mKdV}}P$.
With these in hand we see, using the decomposition $(\id-P^2)=(\id-P)(\id+P)$, we have
\begin{align*}
&&(\id-P)^{-1}&=(\id+P)(\id-P^2)^{-1}\\
\Leftrightarrow&& U^{\mathrm{KdV}}&=(\id+P)U^{\mathrm{mKdV}}\\
\Rightarrow&& U_x^{\mathrm{KdV}}&=\bigl(PU^{\mathrm{mKdV}}\bigr)_x+U_x^{\mathrm{mKdV}}\\
\Leftrightarrow&& U_x^{\mathrm{KdV}}&=\bigl(PU^{\mathrm{mKdV}}\bigr)_x+U^{\mathrm{mKdV}}(P^2)_xU^{\mathrm{mKdV}}.
\end{align*}
If we now apply the kernel bracket $[\,\cdot\,]$ and use the kernel bracket product rule
on the second term on the right, then using the identities just outlined, we generate
the Miura transformation stated.
\end{proof}

\subsection{Mix and match}\label{sec:mixandmatch}
Can we combine the cases in Sections~\ref{NLS} and \ref{mKdV}?
Indeed we can and we explore this herein. We assume $\tm=-\m\in\CC$ and $\tn=\n\in\CC$,
though both are in general non-zero. Recall the general application linear system
from Definition~\ref{appliedlinear}. The general result is as follows.
\begin{proposition}[Combined degree-three system]
Assume the Hilbert--Schmidt operators $P$, $\p$, $Q$, $\tilde{Q}$, $G$ and $\tilde{G}$
satisfy the application linear system in Definition~\ref{appliedlinear}
and their corresponding kernels satisfy the assumptions of Lemma~\ref{existuniqpde}.
Assume $\tm=-\m\in\CC$ and $\tn=\n\in\CC$.
Then, for some $T>0$, the integral kernel $g=g(y,z;x,t)$ corresponding to $G$,
for every $t\in[0, T]$ satisfies, when $\p=-P^\dag$, the matrix kernel equation:
\begin{align*}
(\pa_t-\mu_1\pa_x^2-\mu_2\pa_x^3)g(y,z;x,t)
=&-2\mu_1g(y,0;x,t)g^\dag(0,0;x,t)g(0,z;x,t)\\
&-3\mu_2g(y,0;x,t)g^\dag(0,0;x,t)\partial_x g(0,z;x,t)\\
&-3\mu_2(\partial_x g(y,0;x,t))g^\dag(0,0;x,t)g(0,z;x,t).
\end{align*} 
In particular, $\langle G\rangle(x,t)\coloneqq g(0,0;x,t)$ satisfies the matrix equation:
\begin{multline*}
(\pa_t-\mu_1\pa_x^2-\mu_2\pa_x^3)\langle G\rangle\\
=-2\mu_1\langle G\rangle)\langle G\rangle^\dag\langle G\rangle
-3\mu_2\langle G\rangle\langle G\rangle^\dag\partial_x\langle G\rangle
-3\mu_2(\partial_x\langle G\rangle)\langle G\rangle^\dag\langle G\rangle.
\end{multline*} 
\end{proposition}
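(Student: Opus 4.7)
The plan is to exploit the linearity of $\pa_t$ and of the dispersion operator $\m\pa_x^2+\n\pa_x^3$ to reduce the problem to a superposition of the two cases already handled in Theorems~\ref{2nd order} and \ref{3rd order}. Under the hypotheses $\tm=-\m$ and $\tn=\n$, we have $P_t=\m P_{xx}+\n P_{xxx}$ and $\p_t=-\m\p_{xx}+\n\p_{xxx}$, so $Q_t=\p_tP+\p P_t$ decomposes additively into a $\m$-part, namely $\m(\p P_{xx}-\p_{xx}P)$, and a $\n$-part, namely $\n(\p_{xxx}P+\p P_{xxx})$, with no cross-terms. Since $G=PU$ and $U_t=-UQ_tU$, writing $G_t=P_tU-GQ_tU$ yields
\begin{equation*}
(\pa_t-\m\pa_x^2-\n\pa_x^3)G=\bigl[G_t^{(\m)}-\m\pa_x^2G\bigr]+\bigl[G_t^{(\n)}-\n\pa_x^3G\bigr],
\end{equation*}
where the superscripts indicate retention of only the $\m$- or $\n$-contributions in $P_t$, $\p_t$ and hence $Q_t$. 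Crucially, the two brackets on the right are precisely the quantities computed in Step~1 of the proofs of Theorems~\ref{2nd order} and \ref{3rd order}, respectively.

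First I would verify the decomposition above by direct Leibniz expansion, observing that $\m\pa_x^2G$ and $\n\pa_x^3G$ produce only terms of their respective fixed derivative orders and hence do not mix. Second I would apply to each summand the kernel-bracket manipulations already carried out: Steps~2--3 of the proof of Theorem~\ref{2nd order} reduce the first bracket to $2\m[G][\tilde{G}][G]$, while Steps~2--5 of the proof of Theorem~\ref{3rd order} reduce the second bracket to $3\n\bigl([G][\tilde{G}][G_x]+[G_x][\tilde{G}][G]\bigr)$. Adding these, at the kernel level we obtain
\begin{equation*}
(\pa_t-\m\pa_x^2-\n\pa_x^3)[G]=2\m[G][\tilde{G}][G]+3\n\bigl([G][\tilde{G}][G_x]+[G_x][\tilde{G}][G]\bigr).
\end{equation*}
Third, I would impose $\p=-P^\dag$; as in Remarks~\ref{rmk:choicetildep} and \ref{rmk:complexmKdV} this forces $V=V^\dag$ and hence $\tilde{G}=-G^\dag$. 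Substituting into the equation above produces the matrix kernel equation in the statement, and then evaluating at $y=z=0$ via the observation functional yields the corresponding local matrix PDE.

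The main obstacle is bookkeeping rather than new ideas. In particular, one must verify the choice $\p=-P^\dag$ is compatible with the linear PDEs satisfied by $P$ and $\p$, which (for the Hankel kernels used here) forces $\m$ to be purely imaginary and $\n$ to be real; both hold in the applications of interest, since the NLS case has $\m=-\mathrm{i}$ and the mKdV case has $\n=-1$. One must also track carefully that no cross-terms between $\m$ and $\n$ arise either in the Leibniz expansion or in the kernel-bracket reductions. Since $\m\pa_x^2$ and $\n\pa_x^3$ are distinct constant-coefficient differential operators and the quadratic operator $Q=\p P$ decomposes additively with no mixing between orders, these checks amount to a near-mechanical transcription of the two earlier proofs, with the final result being their linear combination.
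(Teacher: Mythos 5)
Your proposal is correct and follows essentially the same route as the paper: the paper likewise performs the combined Leibniz expansion, computes $Q_t-\m Q_{xx}-\n Q_{xxx}=-2\m(\p_xP)_x-3\n(\p_xP_x)_x$ so that the operator-level right-hand side is exactly the sum of the Step~1 right-hand sides of Theorems~\ref{2nd order} and \ref{3rd order}, then reuses the kernel-bracket reductions of both proofs and sets $\p=-P^\dag$ (so $\tilde G=-G^\dag$) at the end. Your additional consistency observation that $\p=-P^\dag$ forces $\m$ imaginary and $\n$ real is a sound supplement, in line with the consistency remark made in Step~3 of the proof of Theorem~\ref{2nd order}.
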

\begin{proof}
We proceed as previously, though relatively quickly we can simply rely on
results already established in the proofs of Theorems~\ref{2nd order} and \ref{3rd order}.
Recall the system of equations in Definition~\ref{appliedlinear}.
With $G=PU$, using the Leibniz rule, that $P_t=\n P_{xxx}$
and the identities for $\pa U$, $U_{xx}$
and $U_{xxx}$ from Lemma~\ref{Uid} in (i), (iv) and (v), we find
\begin{align*}
\pa_t G-\m&\pa_x^2G-\n\pa_x^3 G\\
=&\; P_tU -PUQ_tU-\m\bigl(P_{xx}U+2P_xU_x+PU_{xx}\bigr)\\
&\;-\n\bigl(P_{xxx}U+3P_{xx}U_x+3P_xU_{xx}+PU_{xxx}\bigr)\\
=&\; -PU(Q_t-\m Q_{xx}-\n Q_{xxx})U+2\m\bigl(P_xUQ_xU+PU_xQ_xU\bigr)\\
&\;+3\n\bigl(P_{xx}UQ_xU+2P_xU_xQ_xU+P_xUQ_{xx}U\\
&\;+2PU_xQ_xU_x+PUQ_{xx}U_x+PU_xQ_{xx}U\bigr).
\end{align*}
We now set $\tm=-\m$ and $\tn=\n$ and assume this holds hereafter.
Since $Q\coloneqq\p P$, by direct computation we have
\begin{align*}
Q_t-\m Q_{xx}-\n Q_{xxx}
=&\; \bigl(\tm\p_{xx}+\tn\p_{xxx}\bigr)P + \p\bigl(\m P_{xx}+\n P_{xxx}\bigr) \\
&\;-\m\bigl(\p_{xx}P+2\p_x P_x+\p P_{xx}\bigr)\\
&\;-\n\bigl(\p_{xxx}P+3\p{xx}P_x+3\p_xP_{xx}+\p P_{xxx}\bigr)\\
=&\;(\tm-\m)\p_{xx}P+(\tn-\n)\p_{xxx}P\\
&\;-2\m\p_x P_x-3\n(\p_xP_x)_x\\
=&\;-2\m(\p_x P)_x-3\n(\p_{x}P_x)_x.
\end{align*}
Substituting this result into the the previous result above
generates a relation with `$\pa_t G-\m\pa_x^2G-\n\pa_x^3 G$' 
on the left-hand side while on the right-hand side we
have the sum, of the terms on the right in the final
relation in Step~1 in the proof of Theorem~\ref{2nd order},
and the terms on the right (after multiplying through by $3\n$)
in the final relation in Step~1 in the proof of Theorem~\ref{3rd order}.
Consequently as we did in Step~2 of the proofs of both Theorems,
we can apply the kernel bracket and use the kernel bracket product rule
and so forth. We can proceed exactly as we did in both proofs
and at the appropriate stage make the choice $\p=-P^\dag$, where
$P^\dag$ is the operator adjoint to $P$.
Note as indicated in Remarks~\ref{rmk:choicetildep} and \ref{rmk:complexmKdV}
at the very late stage in both proofs where $\p$ is chosen, we
need to slightly modify the proofs as $\p=-P^\dag$ implies
$\tilde{G}=-G^\dag$. Both results stated in the Proposition thus follow,
where $g^\dag$ denotes the complex-conjugate matrix transpose.
\end{proof}

\section{Discussion}\label{sec:discussion}
In this paper we have presented a unified approach to linearise
and thereby solve many matrix-valued integrable systems
with local and nonlocal nonlinearities. We have also shown
that all the evolutionary nonlinear flows we present are
evolutionary Grassmannian flows.
There are however, many interesting issues still requiring
further resolution. We discuss these here.

First, there are many further systems with both local and nonlocal nonlinearities
in particular mentioned in Ablowitz and Musslimani~\cite{AM} which we
would like to investigate to see if they can be incorporated in the linearisation
procedure we advocate herein, or at least some variant of it.
One aspect of such investigations would be to see if the induction argument
used by P\"oppe~\cite{poppe2} to derive the KdV hierarchy can be extended
to the non-commutative case.

Second, a comparison between the Fredholm Grassmannian flows we consider here and
the Fredholm Grassmannian solutions considered by Segal and Wilson~\cite{SW}
generates a plethora of possible interesting investigative avenues. 
Segal and Wilson~\cite{SW} consider solution curves on the Fredholm Grassmannian
correspnding to solutions of the full Korteweg de Vries and indeed
Kadomtsev--Petviashvili (KP) hierarchies. They do only treat the scalar
commutative case though. Indeed P\"oppe's original method also extends to these
hierarchies; see P\"oppe~\cite{poppe2,poppe3} and P\"oppe and Sattinger~\cite{poppe4}.
P\"oppe~\cite{poppe2} uses an ingenious recursion relation in the KdV hierarchy case.
Segal and Wilson develop solutions in terms of the determinant bundle associated with
the underlying Fredholm Grassmannian. P\"oppe also expresses the solution
in such a form. Indeed the solution can be expressed in terms of derivatives of the log
of a Fredholm determinant of a map whose graph is a given coordinate
patch of the underlying Fredholm Grassmannian. Segal and Wilson~\cite[Prop.~3.3]{SW}
and P\"oppe~\cite{poppe2} come to this result by slightly different means.
That the solution of the KdV equation can be expressed in terms of such a determinant is
originally due to Dyson~\cite{dyson}. Segal and Wilson expicitly
connect it to the tau-function. Particular classes of solution
curves are associated with particular submanifolds of the Fredholm Grassmannian.
For example rational solution curves can be identified as such.
The decomposition $\Hb=\Vb\oplus\Vb^{\perp}$ that underlies the Fredholm Grassmannian
considered by Segal and Wilson separates Fourier basis functions.
The corresponding decompostion we consider is determined by the
operator pairing $(\mathcal P,\mathcal Q)$ which are distinguished
in the underlying linear system. Matching and/or determining the mapping
between the two corresponding Grassmannians is very much of interest.
Extending the approach we advocate to the full KP hierarchy is also
very much of interest. Another way in which the approach we advocate
based on P\"oppe's method is distinguished, apart from the fact
that we preferentially operate at the operator/kernel level,
is that we explicitly write down the linear flow underpinning the integrable systems considered.
And we do so in principle for arbitrary initial data.
Recall from Remark~\ref{rmk:IVPs}, there is a small gap in the
procedure to solving initial value problems via the approach we advocate,
which is surmountable by scattering procedures, see McKean~\cite[p.~238]{mckean}.
An efficient practical procedure to bridge this gap is also very much of
interest. At the moment, for example in the case of the KdV or mKdV equations,
given arbitrary initial data $p_0=p_0(x)$ we can analytically solve the 
linear equation $\pa_tp+\pa_x^3p=0$ in order to evaluate the solution
$p=p(x,t)$ at any time $t>0$. We can then solve the Riccati relation
Fredholm equation to determine $g(y,z;x,t)$ which we can specialise
to $g(0,0;x,t)$ for the corresponding classical KdV or mKdV solution.
See for example Doikou \textit{et al.\/} \cite{paper3} for numerical solutions
generated in this way. However in general, we would like to add the step where
we are given $g(0,0;x,0)$ and then generate $p_0$ from that.
This means solving the Riccati relation for $p_0$ which requires knowledge of $g_0(0,z;x,0)$. 

Third, another interesting aspect of the fact the flows we consider are Grassmannian flows,
is that we have derived them as flows in the canonical coordinate patch distinguished
by the projection $(\id+\mathcal Q,\mathcal P)\to(\id,\mathcal G)$. This is only
possible provided the Fredholm or modified Fredholm determinant of `$\id+\mathcal Q$' 
is non-zero. If this determinant does become zero then this induces countably many
components, and eigenvalues, of $\mathcal G$ to become singular; see for example Beck and Malham~\cite{BM}.
Such singularites are a consequence of a poor representative coordinate patch, and we
should simply project the data from the linearised system $(\id+\mathcal Q,\mathcal P)$
onto a different coordinate patch as indicated in Section~\ref{subsec:Grassmannianflow},
which is always possible by construction. Note the underlying linear system is not singular.
Characterising the time evolutionary route to such singular behaviour and the relation to poles and/or
other singularities in the integrable system solution represents a major challenge; though
Segal and Wilson and also P\"oppe have partially addressed this in terms of rational 
solutions. However, to emphasise, the idea is that as we consider the time evolution of the solution
from any initial data, any singularites are an artifact of a poor representative coordinate patch
of the Fredholm Grassmannian flow, and the flow can be straightforwardly propagated beyond
the singularity by swapping to a different representative coordinate patch. Such
changes of coordinate patch would then be invoked as any further singularities are approached. 

Lastly we remark that the Grassmannian flow approach based on P\"oppe's method
we advocate can be abstracted further. Indeed see Bauhardt and P\"oppe~\cite{BP-ZS}
who show how P\"oppe's method can be extended to difference equation versions
of integrable systems. We could in principle abstract the context herein to
a flow on an operator algebra which we endow with a derivation operator and
a product rule, the latter with the properties of the kernel product rule.
In principle such abstraction might help presciently identify integrable systems.


\section*{Acknowledgements}
We thank the referees for their very useful comments which helped to significantly improve the original manuscript.

\end{document}